\newcommand{\be}{\begin{equation}}
\newcommand{\ee}{\end{equation}}
\newcommand{\beq}{\begin{eqnarray}}
\newcommand{\eeq}{\end{eqnarray}}
\newtheorem{thm}{Theorem}[section]
\newtheorem{lma}{Lemma}[section]
\newtheorem{cor}{Corollary}[section]
\theoremstyle{remark}
\numberwithin{equation}{section}
\def\be{\begin{equation}}
\def\ee{\end{equation}}
\def\bee{\begin{equation*}}
\def\eee{\end{equation*}}
\def\K{K\"ahler }
\def\KR{K\"ahler-Ricci }
\begin{document}

\title[]
{On the uniqueness of Ricci flow}

 \author{Man-Chun Lee}
\address[Man-Chun Lee]{Department of Mathematics 
University of British Columbia 
121-1984 Mathematics Road 
Vancouver, B.C. V6T 1Z2, Canada}
\email{mclee@math.ubc.ca}


\date{\today}
\begin{abstract}
In this note, we study the problem of the uniqueness of solutions to the Ricci flow on complete noncompact manifolds. We consider the class of solutions with curvature bounded above by $C/t$ when $t>0$ and prove a uniqueness result when initial curvature is of polynomial growth and Ricci curvature of the flow is relatively small.
\end{abstract}


\maketitle

\markboth{Man-Chun Lee}{On the uniqueness of Ricci flow}
\section{introduction}
Let $(M^n,g_0)$ be a complete Riemannian manifold. In \cite{Ham1982}, Hamilton introduced the Ricci flow
$$\frac{\partial}{\partial t} g=-2Ric(g(t)),\;\;g(0)=g_0,$$
and established the short-time existence and uniqueness on compact manifolds. Later on, using the idea of DeTurck's trick \cite{Deturck}, Shi \cite{Shi} and Chen-Zhu \cite{ChenZhu2006} generalized the existence and uniqueness result to complete noncompact manifolds with bounded curvature. They asserted that for any complete noncompact manifold $(M,g_0)$ with bounded curvature, there is a unique short-time solution of the Ricci flow starting from $g_0$ which has bounded curvature. It is natural to ask to what extent we have the short-time existence and uniqueness of the Ricci flow on a general complete noncompact manifold.

When $n=2$, Giesen and Topping \cite{GT1,top} successfully extended the classical results. In particular, they showed that any initial surface (including those that are incomplete and with unbounded curvature) can be flowed in a unique way by a smooth and instantaneously complete solution and the maximal existence time can be explicitly calculated. In case of $n=2$, the Ricci flow can be reduced to a logarithmic fast diffusion equation whose study can be reduced to a single scalar equation. However for $n\geq 3$, the Ricci flow is a system of nonlinear weakly parabolic partial differential equation. It is unclear how much of their work can be extended to higher dimension. 

There are nevertheless a number of existence results in which the initial metric is complete with possibly unbounded curvature. In \cite{Simon}, Simon proved that starting from any sufficiently small $C^0$ perturbation $g_0$ of a complete Riemannian metric with bounded curvature, there is a short-time solution of the Ricci harmonic heat flow. We also refer to the work \cite{Kol,SSS1} where the Ricci harmonic heat flow is solved starting with rough initial data obtained from a sufficiently small perturbation of the Euclidean metric on $\mathbb{R}^n$ and \cite{SSS2} for a similar result in hyperbolic space. In \cite{xu}, Xu constructed a solution from a metric $g_0$ which satisfies a Sobolev inequality and the curvature is bounded in some $L^p$ sense. Fei \cite{He2} constructed a solution when the initial metric is locally Euclidean in appropriate sense using the pseudolocality of the Ricci flow. When $M=\mathbb{C}^n$, Chau-Li-Tam \cite{ChauLiTam} and Yang-Zheng \cite{YangZheng} constructed solutions to the \KR flow which are $U(n)$-invariant. 

In \cite{Cabezas-RivasWilking2011}, Cabezas-Rivas and Wilking obtained a Ricci flow solution when the initial metric $g_0$ has non-negative complex sectional curvature. Moreover the curvature of the solution $g(t)$ will be bounded by $c/t$ for some constant $c>0$ if in addition the initial metric is non-collapsing in the sense that the volume of every geodesic ball of radius 1 is bounded below by a fixed positive constant $v_0$. In the non-collapsing case, the result has been recently generalized to various situations. When the initial metric is \K with non-negative bisectional curvature, the author and Tam \cite{LeeTam2017} were able to construct a short-time solution to the \KR flow. When the initial metric is only Riemannian and has almost weakly $\text{PIC}_1$, Yi \cite{Yi2018} showed that a short-time solution also exists. We also refer the earlier work by Bamler, Cabezas-Rivas and Wilking \cite{BamlerCabezasWilking2017} for the case when $g_0$ has complex sectional curvature bounded from below or equivalently $g_0$ has almost weakly $\text{PIC}_2$. On the other hand, Simon and Topping \cite{SimonTopping2017} used the ideas of Hochard \cite{Hochard2016} and together with their results in \cite{SimonTopping2016} to prove that similar results are true for three dimensional Riemannian manifolds under a weaker condition that the Ricci curvature is bounded from below. 

Remarkably, although the initial metric does not have uniformly bounded curvature on $M$, it is possible that the curvature of the evolving metric $g(t)$ becomes bounded instantaneously. Moreover, most of the examples mentioned above satisfy $|Rm(g(t))|\leq Ct^{-1}$ for some $C>0$ when $t$ is small and positive.

Comparatively, not much is known about the uniqueness except for some special cases. To the best of our knowledge, it is still unknown whether any of the above examples are unique. There are a very few results in this direction. For example, Chau-Li-Tam \cite{ChauTamLi} (see also \cite{fan}) showed the uniqueness of K\"ahler-Ricci flow when the flows are uniformly equivalent to a fixed metric with bounded curvature. Sheng-Wang \cite{SW} studied complete solutions with lower bound on complex sectional curvature and proved uniqueness under some extra assumptions on the initial metric. When $n=3$, Chen \cite{chen} obtained a strong uniqueness result which says that any complete solution of the Ricci flow starting from the Euclidean metric must be stationary. Recently, Kotschwar \cite{KB,KB2} introduced an energy method which extended the classical uniqueness result to the case when two flows are uniformly equivalent and their curvature is bounded above by $C(d_0(x,p)^2+1)/t^\gamma,\gamma<1/2$ or $C/t^\gamma,\gamma<1$. However, most of the solutions mentioned in the constructions above satisfy a curvature bound of the form $C/t$. 

In this note, we discuss the uniqueness problem on the class of solutions with curvature bound $C/t$. Within the \K category, we consider flows which are uniformly equivalent. In this work, we will prove the following.

\begin{thm}\label{kahler}
Let $(M^n,g_0)$ be a complete noncompact \K manifold with complex dimension $n$. Suppose $g(t)$  and $\tilde g(t)$ is a solution to the \K Ricci flow with initial data $g_0$. Assume  further that $\exists \lambda>1,\, C_1>0$ such that 
\begin{enumerate}
\item[(i)] $\lambda^{-1} \tilde g(t)\leq g(t)\leq \lambda \tilde g(t)\quad\text{on}\;\;M \times [0,T]$ and
\item[(ii)] 
$$|Rm(g(t))|\leq \frac{C_1}{t}\quad\text{on}\;\;M\times (0,T].$$
\end{enumerate}
Then $g(t)=\tilde g(t)$ on $[0,T]$.
\end{thm}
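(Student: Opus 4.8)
The plan is to reduce the statement to a scalar, uniformly parabolic complex Monge--Amp\`ere equation for a bounded potential function, and then to run a maximum principle at spatial infinity, following the strategy of Chau-Li-Tam \cite{ChauTamLi}.

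First I would record the potential-theoretic reduction. Let $\omega(t),\tilde\omega(t)$ be the \K forms of $g(t),\tilde g(t)$. In local holomorphic coordinates $\Ric(\omega)=-\ii\ddbar\log\det g_\ijb$, so the \KR flow gives
\[
\p_t\big(\omega-\tilde\omega\big)=\ii\ddbar\log\frac{\det g_\ijb}{\det\tilde g_\ijb},
\]
whose right-hand side is a globally defined $(1,1)$-form. Since $\omega(0)=\tilde\omega(0)=\omega_0$ and, by (i), $\big|\log(\det g_\ijb/\det\tilde g_\ijb)\big|\le n\log\lambda$ pointwise, integrating in $t$ produces a bounded function
\[
\varphi(\cdot,t):=\int_0^t\log\frac{\det g_\ijb(\cdot,s)}{\det\tilde g_\ijb(\cdot,s)}\,ds,\qquad
\omega(t)=\tilde\omega(t)+\ii\ddbar\varphi(t),\quad\varphi(\cdot,0)=0,
\]
satisfying the quantitative bound $\sup_M|\varphi(\cdot,t)|\le nt\log\lambda\to0$ as $t\to0^+$ --- this estimate is the crux of the argument. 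As $\ii\ddbar\varphi(t)=\omega(t)-\tilde\omega(t)$ is smooth, $\Delta_{\omega(t)}\varphi=n-\tr_{\omega(t)}\tilde\omega(t)$ is smooth, so $\varphi$ is smooth on $M\times(0,T]$ (and continuous up to $t=0$) by elliptic regularity. Using $g_\ijb(t)=\tilde g_\ijb(t)+\varphi_\ijb(t)$ we get $\p_t\varphi=\log\big(\det(\tilde g_\ijb+\varphi_\ijb)/\det\tilde g_\ijb\big)$; subtracting the same identity for the zero solution and applying the fundamental theorem of calculus in the Hessian variable,
\[
\p_t\varphi=a^\ijb\varphi_\ijb,\qquad
a^\ijb:=\int_0^1\big((1-s)\,\tilde g(t)+s\,g(t)\big)^\ijb\,ds,
\]
with $(\,\cdot\,)^\ijb$ the inverse Hermitian matrix. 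By (i), $\lambda^{-1}\tilde g\le(1-s)\tilde g+s\,g\le\lambda\tilde g$ for $s\in[0,1]$, hence $\lambda^{-1}\tilde g^\ijb\le a^\ijb\le\lambda\tilde g^\ijb$: the equation for $\varphi$ is uniformly parabolic with no zeroth- or first-order terms.

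Next I would show that a bounded $\varphi$ solving this equation with $\varphi\to0$ uniformly as $t\to0$ must vanish identically. Fix $0<t_1<t_2\le T$. On $M\times[t_1,t_2]$, hypothesis (ii) gives $|\Rm(g(t))|\le C_1/t_1$, so integrating $\p_t g_\ijb=-\Ric_\ijb$ the metrics $g(t)$, $t\in[t_1,t_2]$, are all uniformly equivalent to the metric $g(t_1)$, which is complete with bounded curvature; by (i) and this equivalence, each $a^\ijb$ is uniformly equivalent to $g(t_1)^\ijb$ on the slab. Using $r:=d_{g(t_1)}(\cdot,p)$ and Hessian comparison (with Calabi's trick at the cut locus) I would construct a smooth $f\ge0$ with $f(x)\to\infty$ as $r(x)\to\infty$ and $a^\ijb f_\ijb\le K$ on the slab for a uniform $K$ (for \K metrics $f_\ijb=\p_i\p_{\bar j}f$ is connection-independent). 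Then for $\e>0$ the function
\[
v:=\varphi-nt_1\log\lambda-\e\big(f+(K+1)(t-t_1)\big)
\]
satisfies $\p_t v-a^\ijb v_\ijb\le-\e<0$, is $\le0$ at $t=t_1$, and tends to $-\infty$ as $r\to\infty$; the ordinary maximum principle on $\ol{B_{g(t_1)}(p,R)}\times[t_1,t_2]$, followed by $R\to\infty$ and $\e\to0$, yields $\varphi\le nt_1\log\lambda$. Running the same comparison for $-\varphi$ --- which solves $\p_t(-\varphi)=b^\ijb(-\varphi)_\ijb$ with $b^\ijb=\int_0^1\big((1-s)\,g(t)+s\,\tilde g(t)\big)^\ijb\,ds$, again uniformly elliptic --- gives $|\varphi|\le nt_1\log\lambda$ on $M\times[t_1,t_2]$. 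Finally, letting $t_1\to0$ forces $\varphi\equiv0$ on $M\times(0,T]$; hence $\omega(t)=\tilde\omega(t)$, i.e.\ $g(t)=\tilde g(t)$.

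The hard part is this last maximum principle at infinity. Because the curvature bound (ii) degenerates like $1/t$, one cannot place the initial time slice at $t=0$, which is precisely where one would like the geometry controlled; moreover $\tilde g$ carries no curvature bound at all, so all geometric estimates must be extracted from $g(t_1)$ alone. What makes the argument close is that $\varphi$ is not merely bounded but satisfies $\sup_M|\varphi(\cdot,t)|=O(t)$: this lets one begin the comparison on a slab $[t_1,t_2]$ with $t_1>0$, where $g(t_1)$ has bounded curvature, and then send $t_1\to0$. The remaining ingredients --- the joint regularity of $\varphi$, the uniform equivalence of the $g(t)$ to $g(t_1)$ across a fixed slab, and the barrier $f$ with controlled $a$-Hessian --- are routine consequences of (i)--(ii).
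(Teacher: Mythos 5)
Your reduction is exactly the paper's: your $\varphi$ is the paper's $w=\int_0^t\log(\det g/\det\tilde g)\,ds$ (taken from Chau--Li--Tam), the identity $\varphi_{i\bar j}=g_{i\bar j}-\tilde g_{i\bar j}$ and the equation $\partial_t\varphi=a^{i\bar j}\varphi_{i\bar j}$ with $a^{i\bar j}=\int_0^1((1-s)\tilde g+sg)^{i\bar j}ds$ are verbatim the computations in the paper, and both arguments ultimately rest on the bound $|\varphi(\cdot,t)|\le nt\log\lambda$ coming from hypothesis (i). Where you diverge is the maximum-principle step, and your version is correct and in some ways softer. The paper stays on all of $(0,T]$: it converts (ii) into the quantitative equivalence $\lambda^{-1}t^{\alpha}\bar g\le g(t),\tilde g(t)\le\lambda t^{-\alpha}\bar g$ with $\bar g=g(T)$ and $\alpha=2(n-1)C_1$, takes Tam's exhaustion $\rho$ for $\bar g$, and builds the single barrier $\eta=f(t)\rho$ with $f(t)=\exp\!\left(-\tfrac{2\lambda C_2}{\beta t^{\beta}}\right)$, whose rapid vanishing as $t\to0^+$ absorbs the $t^{-1-\beta}$ degeneration of the coefficients; it then applies the maximum principle to $w^2-\epsilon\eta-\epsilon t$, the term $\epsilon t$ together with $w^2\le Ct^2$ handling the initial time. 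You instead work on slabs $[t_1,T]$, where (ii) makes all metrics uniformly equivalent to the complete bounded-curvature metric $g(t_1)$, use a soft exhaustion with controlled $a$-Hessian there, take $|\varphi(\cdot,t_1)|\le nt_1\log\lambda$ as the slab's initial datum, and let $t_1\to0$; all slab constants may blow up as $t_1\to0$ but never enter the final bound, so the limit is clean. Your route avoids the explicit exponent bookkeeping and the clever time weight, at the cost of a two-parameter limit; the paper's route gives one global barrier. One small point to tighten: Calabi's trick does not by itself produce a smooth barrier $f$ -- either run the maximum principle with the non-smooth distance in the support sense, or simply quote a smooth exhaustion with bounded gradient and Hessian for the bounded-curvature metric $g(t_1)$ (e.g.\ \cite{TamExh}, as the paper does for $g(T)$). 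Also note your $b^{i\bar j}$ equals $a^{i\bar j}$ after $s\mapsto1-s$, so the lower bound for $\varphi$ needs no separate equation.
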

In fact, one might only require $\tilde g(t)$ to be \K by the result of Huang and Tam \cite{HuangTam2018}. In particular, they showed that the Ricci flow will remain \K if it is \K initially and satisfies $|Rm|(g(t))\leq Ct^{-1}$ for some $C>0$. In the most of the recent constructions discussed above, the solutions satisfy non-integrable curvature bounds, and it is hard to say whether two such solutions remain uniformly equivalent to each other. From this point of view, we discuss Ricci flows in which uniform equivalence is not assumed. We use the energy method in \cite{KB} to prove that if the curvature of the initial metric is of polynomial growth and the curvature of flows is bounded by $a/t$ where $a$ is relatively small, then the solutions must agree. 

\begin{thm}\label{real1}
For any $m\in\mathbb{N}$, $\exists\, \epsilon=\epsilon(m)>0$ such that the following holds: Suppose $(M,g_0)$ is a complete noncompact manifold satisfying
$$|Rm(g_0)|\leq C_0( d(x,p)+1)^m$$
for some $C_0>0$ and a fixed point $p\in M$. If $g(t)$ and $\tilde g(t)$ are two smooth solutions to Ricci flow on $M\times [0,T]$ with $g(0)=\tilde g(0)=g_0$ for which 
$$|Rm(\tilde g(t))|_{\tilde g(t)}+|Rm(g(t))|_{g(t)}\leq \frac{\epsilon}{t}$$
on $M\times (0,T]$, then $g(t)=\tilde g(t)$ for all $t\in [0,T]$.
\end{thm}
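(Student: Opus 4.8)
The plan is to adapt the energy method of Kotschwar \cite{KB, KB2} to a setting in which no uniform equivalence of the two flows is available; the new inputs are the polynomial growth of $Rm(g_0)$ and the smallness of $\epsilon$. It suffices to prove $g=\tilde g$ on a short interval $[0,T_1]$, since the set $\{t:g(s)=\tilde g(s)\text{ for all }s\le t\}$ is closed; moreover parabolic rescaling preserves both hypotheses (changing only $C_0$, leaving $m$ fixed), so we may take $T$ itself as small as we please.

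Following \cite{KB}, I would introduce the difference tensors $h=g-\tilde g$, the $(1,2)$-tensor $A={}^{g}\Gamma-{}^{\tilde g}\Gamma$, and $\mathcal S=\Rm(g)-\Rm(\tilde g)$, together with enough covariant derivatives of these to obtain a closed system, and write $\mathcal Y$ for the sum of their $g$-norms. The Ricci flow equations, the Bianchi identity and the commutation formulas give, for each component $X$, a differential inequality of the schematic form
\be
\left|\nabla_t X-\Delta X\right|\ \le\ C(n)\bigl(1+|\Rm(g(t))|+|\Rm(\tilde g(t))|\bigr)\,\mathcal Y\ \le\ C(n)\Bigl(1+\tfrac{2\epsilon}{t}\Bigr)\mathcal Y .
\ee
I would then fix a weight $\rho=e^{-\gamma\varphi}$, with $\varphi$ a smooth function comparable to $d_{g_0}(x,p)^2$ (or the time-dependent variant $d_{g_0}(x,p)^2/(a(T-t))$), and set $\mathcal E(t)=\int_M\mathcal Y^2\,\rho\,d\mu_{g(t)}$.

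Two preliminary facts are needed. First, that $\mathcal E$ is finite and continuous on $[0,T]$ with $\mathcal E(0)=0$: here the bound $|Rm(g_0)|\le C_0(d(x,p)+1)^m$, together with local (Shi-type) parabolic estimates on $g_0$-balls interpolated with the $\epsilon/t$ bound, should give at most polynomial growth of $h$, $A$, $\mathcal S$ and their relevant derivatives on $M\times[0,T]$ — slow enough that the Gaussian weight $\rho$ makes every integral converge and justifies integration by parts with no boundary contribution at spatial infinity. Second, and more delicate, one needs a quantitative vanishing rate, say $\mathcal E(t)\le Ct^{b}$ as $t\to0$ for some fixed $b>0$: since both flows are smooth up to $t=0$ with $g(0)=\tilde g(0)=g_0$, differentiating the Ricci flow equation shows that every $t$-derivative of $g-\tilde g$ vanishes at $t=0$, so $\mathcal Y$ vanishes to high order at $t=0$ pointwise, and the polynomial a priori control should upgrade this to the weighted statement.

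Granting these, I would differentiate $\mathcal E$: integration by parts on the $\Delta X$ terms yields $-\int|\nabla(\cdot)|^2\rho$ plus a gradient-of-weight term, which the choice of $\varphi$ absorbs into the good negative term; the reaction terms contribute at most $\int C(n)(1+\tfrac{2\epsilon}{t})\mathcal Y^2\rho$; and the $t$-derivatives of $\rho$ (through $\partial_t\varphi$) and of $d\mu_{g(t)}$ (through the scalar curvature, again $O(\epsilon/t)$) give terms of the same type. Using $1\le T/t$ on $(0,T]$ to fold the bounded coefficients into the singular term, this produces
\be
\frac{d}{dt}\mathcal E(t)\ \le\ \frac{C_1(n)\,\epsilon+C_2(n)\,T}{t}\,\mathcal E(t),\qquad t\in(0,T].
\ee
Choosing first $T$, then $\epsilon=\epsilon(m,n)$, small enough that $a:=C_1\epsilon+C_2T<b$, we find $t^{-a}\mathcal E(t)$ non-increasing on $(0,T]$ while $t^{-a}\mathcal E(t)\le Ct^{b-a}\to0$; hence $\mathcal E\equiv0$, so $\mathcal Y\equiv0$ and $g=\tilde g$ on $[0,T]$. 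The main obstacle is precisely the interaction between the $1/t$ singularity of the curvature bound — which prevents a Grönwall inequality with bounded coefficient, so the conclusion $\mathcal E\equiv0$ genuinely requires the vanishing rate at $t=0$ to beat the exponent $a$ (this is why "relatively small" curvature is assumed: smallness of $\epsilon$ pushes $a$ below $b$) — and the need to establish that weighted vanishing $\mathcal E(t)\le Ct^b$, rather than merely pointwise vanishing, together with the weighted integrability; controlling the growth of the difference tensors at spatial infinity is exactly where the polynomial growth of $Rm(g_0)$ enters.
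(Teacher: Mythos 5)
Your proposal follows the right general framework (Kotschwar's $(h,A,S)$ energy method), but the central step is not justified: the master inequality $\frac{d}{dt}\mathcal E\le \frac{C_1(n)\epsilon+C_2(n)T}{t}\,\mathcal E$ cannot be extracted from the hypotheses in the schematic way you describe. In the evolution equations for $A$ and $S$ (and in the divergence term $U$ that must be integrated by parts), the coefficients involve $|\tilde\nabla\tilde R|$, which by Shi's estimate is only $O(t^{-3/2})$ — strictly worse than $1/t$ — and every term carries factors of the pointwise equivalence function $\lambda(x,t)$ between $g(t)$ and $\tilde g(t)$, which is exactly what is \emph{not} uniformly bounded here, since no uniform equivalence of the two flows is assumed. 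Smallness of $\epsilon$ by itself does not turn these into $O(\epsilon/t)$ coefficients with purely dimensional constants. This is precisely what the paper's argument is organized around: integrating the $\epsilon/t$ Ricci bound gives $\lambda(x,t)\le C(d_T(x,p)+1)^{4m\epsilon}$ (Lemma \ref{equiv}); the curvature and $\nabla Rm$ bounds are interpolated into $C(d_T+1)^{m\delta}t^{-(1-\delta)}$ and $C(d_T+1)^{3m\delta/2}t^{-(3/2-\delta)}$ (Corollary \ref{esti}); and with the time-weights $\bar h=t^{-a}h$, $\bar A=t^{-b}A$, the choices $\delta=1/(4m)$, $\epsilon=1/(40m)$, $1>a>1-\delta$, $\tfrac12>b>\tfrac12-\delta$, and a cutoff at scale $r$, the truncated energy obeys $\partial_tE_r\le Nr^2t^{-\gamma}E_r+Nt^{-1/2}e^{-L_2r^2/2}$ with $\gamma<1$, so an ordinary Gr\"onwall argument closes. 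Note that this is also where $\epsilon$ acquires its dependence on $m$; in your scheme the constants come out depending only on $n$, which would prove a strictly stronger statement and is a sign that the step where $m$ must enter (absorbing the polynomially growing factors coming from $\lambda$ and from the interpolated derivative bounds) has been skipped.

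The second gap is your substitute for Gr\"onwall, the vanishing rate $\mathcal E(t)\le Ct^{b}$. Pointwise, all $t$-derivatives of $g-\tilde g$ do vanish at $t=0$ when both flows are smooth up to $t=0$, but the constants in the resulting Taylor expansions depend on local bounds for derivatives of the curvature near $t=0$, and these are not uniformly controlled over the noncompact manifold by the hypotheses: only $|Rm(g_0)|$ is assumed to grow polynomially, nothing is assumed on $\nabla Rm(g_0)$, and the $\epsilon/t$ bound degenerates as $t\to 0$. Upgrading the pointwise infinite-order vanishing to a weighted-integral rate that is uniform enough to beat your exponent $a$ is therefore a genuine missing step, not a routine one. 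The paper sidesteps it: because its differential inequality has a coefficient $t^{-\gamma}$ with $\gamma<1$, it only needs $\bar E_r(t)\to0$ as $t\to0^{+}$, which is supplied by the $t^{-a}$, $t^{-b}$ weights via \cite[Lemma 10]{KB}, and the proof is completed by letting $s\to0$, then $r\to\infty$, and finally iterating (or invoking Chen--Zhu) to cover all of $[0,T]$.
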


{\it Acknowledgement}: The author would like to thank his advisor Professor Luen-Fai Tam for his constant support and encouragement. He also thanks the referee for useful comments.

\section{Uniqueness of \KR flow}
In \cite{KB2}, the author consider the uniqueness problem if the curvature is bounded above by $Ct^{-\gamma}$ for $\gamma\in [0,1)$. By integrability of the Ricci curvature, two solutions $g(t)$ and $\tilde g(t)$ are also uniform equivalent to $g_0$ and hence to each other. In this section, we consider the problem within \K category and extend the uniqueness to the case of $\gamma=1$ provided that the $g(t)$ and $\tilde g(t)$ are uniformly equivalent along the flow.

\begin{proof}[Proof of Theorem \ref{kahler}]
By parabolic re-scaling, we may assume $T=1$. Since $\bar g=g(1)$ has bounded curvature. By \cite[Theorem 1]{TamExh}, $\exists\, \rho\in C^\infty(M)$ such that 
\be\label{e-exhaution-1}
 \left\{
   \begin{array}{ll}
     &1+d_{\bar g}(x,p)\le \rho(x)\le C_2(1+d_{\bar g}(x,p)) \\
       & |\nabla_{\bar g}\rho|_{\bar g}\le C_2  \\
       & |\nabla^2_{\bar g}\,\rho|\le C_2
   \end{array}
 \right.
\ee
for some $C_2>0$. By Ricci flow equation, for all $t\in (0,1]$,
\begin{align}\label{eq1}
\lambda^{-1}t^\alpha  \bar g\leq \tilde g(t),\,g(t)\leq \frac{\lambda}{t^\alpha} \bar g
\end{align}
where $\alpha=2(n-1)C_1$. We may assume $\alpha=1+\beta>1$

As in \cite{ChauTamLi}, we define $w:M\times [0,1]\rightarrow \mathbb{R}$ to be
$\displaystyle w(x,t)=\int^t_0 \log \frac{\det \,g(x,s)}{\det\, \tilde g(x,s)}\,ds$. Let $h(x,t,s)=sg(x,t)+(1-s)\tilde g(x,t)$, then
\begin{align*}
\frac{\partial w}{\partial t}&=\log\frac{\det\,g(x,t)}{\det \, \tilde g(x,t)}\\
&=\int^1_0 \frac{\partial}{\partial s}\log\det\, h(x,t,s)\,ds\\
&=\left(g_{i\bar j}(x,t)-\tilde g_{i\bar j}(x,t)\right)\int^1_0 h^{i\bar j}(x,t,s)\,ds.
\end{align*}
On the other hand,
\begin{align*}
w_{i\bar j}(x,t)&=\int^t_0 -R_{i\bar j}(x,s)+\tilde R_{i\bar j}(x,s) \,ds\\
&=g_{i\bar j}(x,t)-\tilde g_{i\bar j}(x,t).
\end{align*}
Hence,
$$\frac{\partial }{\partial t}w(x,t)= w_{i\bar j}(x,t)\int^1_0 h^{i\bar j}(x,t,s) \,ds.$$
In particular,
\begin{align*}
&\quad \frac{\partial }{\partial t}w^2(x,t)- (w^2)_{i\bar j}(x,t)\int^1_0 h^{i\bar j}(x,t,s)ds \\
&=2w \left(\partial_t w(x,t) -w_{i\bar j}(x,t)\int^1_0 h^{i\bar j}(x,t,s)ds\right)-2w_i(x,t) w_j(x,t)\int^1_0 h^{i\bar j}(x,t,s)ds\\
&\leq 0.
\end{align*}
Let $\eta(x,t):M\times (0,1]\rightarrow [0,+\infty)$ by $\eta(x,t)=f(t)\rho(x)$ where $f(t)=\exp\left(-\frac{2\lambda C_2}{\beta t^{\beta}}\right)$. Using (\ref{e-exhaution-1}) and (\ref{eq1}), we have
\begin{align*}
 &\quad \frac{\partial }{\partial t}\eta(x,t)-\eta_{i\bar j}(x,t)\int^1_0 h^{i\bar j}(x,t,s) \,ds\\
&=f'(t)\rho(x)-f(t)\rho_{i\bar j}(x,t)\int^1_0 h^{i\bar j}(x,t,s)\, ds\\
&\geq f'(t)\rho(x)-\frac{\lambda}{t^{1+\beta}} f(t)|\bar\nabla^2 \rho|_{\bar g} \\
&\geq f'(t)-\frac{\lambda C_2}{t^{1+\beta}} f(t)\\
&>0.
\end{align*}
For any $\epsilon>0$, the function $F_\epsilon(x,t)=w^2(x,t)-\epsilon \eta(x,t)- \epsilon t$ satisfies 
\begin{align}\label{max}
\frac{\partial }{\partial t}F(x,t)-F_{i\bar j}(x,t)\int^1_0 h^{i\bar j} (x,t,s)\,ds<0\quad\text{on}\;\;M\times (0,1].
\end{align}
Since $g(t)$ and $\tilde g(t)$ are uniformly equivalent, $w^2\leq Ct^2$ for some $C>0$. Hence, $\exists\, t_0\in (0,1]$ such that for all $t\in [0,t_0]$, $w^2-\epsilon t<0$. On the other hand, as $w^2$ is bounded, there exists a compact set $K$ such that for all $(x,t) \in M\setminus K\times (t_0,1]$, 
$$F_\epsilon(x,t)<0.$$
Suppose $F_\epsilon>0$ somewhere, then it achieves its positive maximum at $(x',t')\in K\times (t_0,1]$ which is impossible by (\ref{max}). Therefore, for any $\epsilon>0$, $F_\epsilon(x,t)\leq 0$ on $M\times [0,1]$. By letting $\epsilon\rightarrow 0$, we conclude that $w\equiv 0$ which implies $g(t)=\tilde g(t)$ on $M\times[0,1]$ by differentiating $w(x,t)$ with respect to space.
\end{proof}

\section{Uniqueness of Ricci flow }
If the curvature is bounded above by $Ct^{-\gamma}$ where $\gamma<1$, then one can easily see  that two flows are equivalent along the flow. However there are examples of solutions with curvature bounded above by $Ct^{-1}$ which is not integrable, then it is hard to say if two Ricci flow are uniformly equivalent.
In this section, we consider the uniqueness problem of the Ricci flow without the assuming the uniform equivalence of the solutions. Instead, we consider the Ricci flow solutions starting from a metric with curvature of at most polynomial growth bound and enjoy $a/t$ where $a$ is small relative to the polynomial order. More precisely, we prove the following.

\begin{thm}\label{real}
For any $m\in\mathbb{N}$, $\exists\, \epsilon=\epsilon(m)>0$ such that the following holds: Suppose $(M,g_0)$ is a complete noncompact manifold satisfying
$$|Rm(g_0)|\leq C_0( d(x,p)+1)^m$$
for some $C_0>0$ and a fixed point $p\in M$. If $g(t)$ and $\tilde g(t)$ are two smooth solutions to the Ricci flow on $M\times [0,T]$ with same initial data $g(0)=\tilde g(0)=g_0$ and satisfy
$$|Rm(\tilde g(t))|_{\tilde g(t)}+|Rm(g)|_{g(t)}\leq \frac{K}{t}$$
together with  $$|Ric(\tilde g(t))|_{\tilde g(t)}+|Ric(g(t))|_{g(t)}\leq \frac{\epsilon}{t}$$
on $M\times (0,T]$ for some $K>0$, then $g(t)=\tilde g(t)$ for all $t\in [0,T]$.
\end{thm}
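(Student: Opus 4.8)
The plan is to run Kotschwar's weighted energy method from \cite{KB} at the borderline exponent $\gamma=1$, where it normally fails because $\int_0^t s^{-1}\,ds=\infty$. The two new ingredients needed to get past this are (a) \emph{a priori distortion estimates} that substitute for the uniform equivalence of $g(t)$ and $\tilde g(t)$ assumed in the \K case, and (b) the observation that a smooth solution on the \emph{closed} interval $[0,T]$ is rigid at $t=0$ to infinite order. After a parabolic rescaling we may take $T$ small. \textbf{Step 1 (a priori estimates).} Since $g,\tilde g$ are smooth on $M\times[0,T]$ and the flow equation forces $\partial_t^k g|_{t=0}$ and $\partial_t^k\tilde g|_{t=0}$ to be the \emph{same} universal differential expression in $g_0$, the difference tensors $h:=g-\tilde g$, $\mathbf{A}:=\Gamma(g)-\Gamma(\tilde g)$ and $\mathbf{S}:=\Rm(g)-\Rm(\tilde g)$ vanish to infinite order at $t=0$; concretely, on every $g_0$-ball $B_{g_0}(p,R)$ one has $\sup_{B_{g_0}(p,R)}\big(|h|+|\mathbf{A}|+|\mathbf{S}|\big)(\cdot,t)\le C_{N,R}\,t^{N}$ for each $N$. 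Next, combining $|\Rm(g_0)|\le C_0(1+d_{g_0}(\cdot,p))^{m}$ with $|\Ric|\le\epsilon/t$ and Shi's local estimates, I would show that $g(t)$ and $\tilde g(t)$ stay comparable to $g_0$ with only mild, polynomial-in-$d_{g_0}$ distortion of degree $O(\epsilon m)$ (for the metrics, the distances to $p$, and the volume element); this is where $\epsilon$ small relative to $m$ is used, to keep these distortion exponents below a fixed threshold. Shi's estimates also give $|\nabla^k\Rm(g(t))|,|\nabla^k\Rm(\tilde g(t))|\le C_k t^{-1-k/2}$ for $t>0$, which controls $|\mathbf{A}|,|\mathbf{S}|,|\nabla\mathbf{A}|$ by negative powers of $t$ times at-most-polynomial factors in $d_{g_0}$.

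\textbf{Step 2 (difference system and weighted energy).} As in \cite{KB}, $(h,\mathbf{A},\mathbf{S})$ satisfies a closed coupled system of the schematic form $(\partial_t-\Delta_{g(t)})(h,\mathbf{A},\mathbf{S})=\mathcal{N}$ with $|\mathcal{N}|\le C\,|\nabla(h,\mathbf{A},\mathbf{S})|+ \tfrac{C(n,K)}{t}|(h,\mathbf{A},\mathbf{S})|$ up to terms that can be absorbed by a good gradient term. I would then consider $\mathcal{E}(t)=\int_M\big(|h|^2+|\mathbf{A}|^2+|\mathbf{S}|^2\big)_{g(t)}\,e^{-\phi(x,t)}\,dV_{g(t)}$ with a Gaussian-type weight $\phi(x,t)=\rho(x,t)^2/\sigma(t)$, where $\rho$ is a smoothing of the $g(t)$-distance to $p$, so $|\nabla_{g(t)}\rho|_{g(t)}\le 1$ and $|\Delta_{g(t)}\rho|\lesssim\sqrt{K/t}$ by Laplacian comparison (the point of using the $g(t)$-distance is that $g(t)$ has bounded curvature at each fixed $t$, hence at most exponential volume growth, so the Gaussian weight wins no matter how large $m$ is). The polynomial-in-$d_{g_0}$ bounds and mild metric distortion of Step 1 make $\mathcal{E}(t)$ finite for $t>0$ and, crucially, make all the integrations by parts below free of boundary terms. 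Differentiating $\mathcal{E}$, feeding in the system, Cauchy--Schwarz-ing the cross terms against the resulting $\int|\nabla(h,\mathbf{A},\mathbf{S})|^2 e^{-\phi}$, and exploiting the Gaussian identities for $\phi$ (with $\sigma$ chosen so the $|\nabla\phi|^2$ and $-\partial_t\phi$ contributions largely cancel), I expect a differential inequality $\tfrac{d}{dt}\mathcal{E}(t)\le\tfrac{\Lambda}{t}\mathcal{E}(t)$ on the relevant subinterval, with $\Lambda=\Lambda(n,K,m)$; the $\epsilon$-smallness enters once more to keep the weight-derivative and distortion error terms within this bound.

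\textbf{Step 3 (conclusion).} Integrating gives $\mathcal{E}(t)\le(t/\tau)^{\Lambda}\mathcal{E}(\tau)$ for $\tau<t$. Using Step 1, choose a radius $R(\tau)\to\infty$ growing slowly and split $\mathcal{E}(\tau)$ into the part over $B_{g_0}(p,R(\tau))$ — bounded by $C_{N,R(\tau)}^2\,\tau^{2N}\mathrm{Vol}$, which the slow growth of $R(\tau)$ keeps $O(\tau^{N'})$ for every $N'$ — and the exterior part — bounded by the crude bounds $|\mathbf{S}|\lesssim 1/\tau$, $|h|,|\mathbf{A}|\lesssim\tau^{-C}$ times $\int_{M\setminus B_{g_0}(p,R(\tau))}e^{-\phi}\,dV_{g(\tau)}$, which the Gaussian weight makes smaller than any power of $\tau$. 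Hence $\mathcal{E}(\tau)=O(\tau^{N'})$ for every $N'$; taking $N'>\Lambda$ and letting $\tau\to0$ forces $\mathcal{E}\equiv0$, so $h\equiv0$ and $g(t)=\tilde g(t)$ on $[0,T]$.

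\textbf{Main obstacle.} The heart of the matter is reconciling the competing demands on the weight at the $t=0$ endpoint: because $(t/\tau)^{\Lambda}$ does not stay bounded as $\tau\to0$ (unlike the $\gamma<1$ case of \cite{KB}), one genuinely needs the super-polynomial decay of $\mathcal{E}(\tau)$, which is supplied on compact sets by smoothness up to $t=0$ and on the complement by Gaussian decay; arranging a single weight that is simultaneously wide enough for finiteness and valid integration by parts, with the right time-profile for the energy inequality to close with a \emph{finite} constant, and with sharp enough spatial decay to absorb the $1/t$-type pointwise bounds in the exterior estimate, is exactly where the hypothesis that $\epsilon$ be small relative to $m$ is indispensable.
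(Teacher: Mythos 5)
Your overall framework (Kotschwar's weighted energy, polynomial distortion control in place of uniform equivalence, a weight adapted to a bounded-curvature time slice) is the same family as the paper's, but the mechanism you use to close the argument at the borderline exponent is different and has two genuine gaps. First, the schematic bound $|\mathcal N|\le C|\nabla(h,\mathbf A,\mathbf S)|+\tfrac{C(n,K)}{t}|(h,\mathbf A,\mathbf S)|$ is not what the difference system actually gives: the couplings $h*\tilde\nabla\tilde R$ (in the $\mathbf A$-equation) and $h*R*R$, $\mathbf A*\tilde\nabla\tilde R$ (in the $\mathbf S$-equation) carry factors of order $t^{-3/2}$ and $t^{-2}$, and, since the flows are not assumed uniformly equivalent, every such term also carries the spatially unbounded factor $\lambda(x,t)^{O(1)}\sim (1+d)^{O(m\epsilon)}$. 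Kotschwar's device for redistributing the time powers is the rescaling $\bar h=t^{-a}h$, $\bar A=t^{-b}A$, which you omit; to reach your claimed coefficient $\Lambda/t$ one is forced to the exact critical choices $a=1$, $b=\tfrac12$, and even then the $\lambda$-factors sit in front of exactly $1/t$, where they cannot be absorbed by the time derivative of a Gaussian weight: requiring $\partial_t\phi\gtrsim \varepsilon\rho^2/t$ on all of $(0,\tau]$ forces $1/\sigma(t)\to-\infty$ as $t\to0$, so no positive Gaussian width survives. Hence the inequality $\tfrac{d}{dt}\mathcal E\le\tfrac{\Lambda}{t}\mathcal E$ with a finite constant $\Lambda$ is not justified. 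The paper gets past precisely this point by interpolating the curvature bounds ($|\Rm|\le C(1+d_T)^{m\delta}t^{-(1-\delta)}$ with $\delta=\tfrac{1}{4m}$, Corollary \ref{esti}), choosing strictly subcritical $a\in(1-\delta,1)$, $b\in(\tfrac12-\delta,\tfrac12)$, and using a compactly supported cutoff at scale $r$: the Gronwall coefficient becomes $Nr^2/t^{\gamma}$ with $\gamma<1$ (time-integrable), the cutoff error carries $e^{-L_2r^2/2}$, and one lets $s\to0$, then $r\to\infty$; the smallness $\epsilon=\tfrac{1}{40m}$ is used concretely to keep all spatial exponents at most $2$.

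Second, your substitute for the divergent Gronwall factor---super-polynomial decay of $\mathcal E(\tau)$ from infinite-order agreement at $t=0$---is not available from the hypotheses in the quantitative form you need. The constants $C_{N,R}$ in $\sup_{B_{g_0}(p,R)}(|h|+|\mathbf A|+|\mathbf S|)\le C_{N,R}\,t^{N}$ are built from derivatives of $\Rm(g_0)$ of all orders on $B_{g_0}(p,R)$, about which nothing is assumed (only $|\Rm(g_0)|$ is controlled), so they are finite for each $R$ but with no growth control in $R$. Meanwhile your exterior estimate forces $R(\tau)\ge c\sqrt{\log(1/\tau)}$ for a definite $c$ (to beat the $\tau^{-C}$ crude bounds against the Gaussian and the exponential volume growth), which is incompatible with ``choosing $R(\tau)$ to grow slowly'': to conclude $C_{N,R(\tau)}\tau^{2N}=O(\tau^{N'})$ you would need an unjustified quantitative hypothesis such as $C_{N,R}\le e^{\alpha R^2}$. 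The paper needs only the mild, quantitatively controlled vanishing $|h|\le Ct$, $|\mathbf A|\le C\sqrt t$ on compact sets (Corollary \ref{esti} with $\delta=1$, then \cite[Lemma 10]{KB}), which is exactly why it keeps $a<1$, $b<\tfrac12$ and runs the subcritical, cutoff version rather than your critical-exponent one. To repair your proposal you would essentially have to reintroduce the rescalings, the interpolation of Corollary \ref{esti}, and the $r$-cutoff, at which point it becomes the paper's argument.
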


\subsection{Exhaustion function on $M$}
In order to apply maximum principle using energy method. We first  construct a exhaustion function on $M$.

\begin{lma}\label{exh}
Suppose $g(t)$ is a solution of Ricci flow on $M\times [0,T]$ satisfying 
$$|Ric(g(t))|\leq \frac{1}{4t}\quad\text{on}\;\;M\times (0,T].$$
Then for any $L_1,L_2>0$, there exists $\tau=\tau(L_1,L_2,T)>0$ and a smooth function $\eta:M \times [0,\tau]\rightarrow [0,+\infty)$ such that 
$$\frac{\partial \eta}{\partial t}\geq L_1 |\nabla \eta|^2\quad\text{and}\quad \eta(x,t)\geq L_2 r(x)^2$$
on $M\times (0,T]$ where $r(x)= d_{g(T)}(x,p)$ for some $p\in M$.
\end{lma}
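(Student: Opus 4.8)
The plan is to produce $\eta$ of the separated form $\eta(x,t)=\phi(t)\,\rho(x)^2$, where $\rho$ is a fixed smoothing of $r(x)=d_{g(T)}(x,p)$ with bounded gradient, and $\phi>0$ is obtained by integrating a scalar ODE chosen so that the spatial gradient term in $L_1|\nabla\eta|^2$ is exactly absorbed into $\partial_t\eta$.

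I would begin with the metric comparison forced by the Ricci bound. From $\partial_t g=-2\,\mathrm{Ric}(g(t))$ and $|\mathrm{Ric}(g(t))|\le\frac1{4t}$ one has $-\frac1{2t}\,g(t)\le\partial_t g(t)\le\frac1{2t}\,g(t)$, so for a fixed vector $v$ the quantity $\log g(t)(v,v)$ has $t$-derivative of absolute value at most $\frac1{2t}$; integrating from $t$ to $T$ gives, for $0<t\le T$,
\[
(t/T)^{1/2}\,g(T)\;\le\;g(t)\;\le\;(T/t)^{1/2}\,g(T),\qquad\text{hence}\qquad g^{-1}(t)\;\le\;(T/t)^{1/2}\,g^{-1}(T)
\]
as quadratic forms. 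Using that $g(T)$ has bounded curvature — as it does wherever this lemma is applied — \cite[Theorem 1]{TamExh}, exactly as in the proof of Theorem \ref{kahler}, supplies $\rho\in C^\infty(M)$ with $1+r(x)\le\rho(x)$ and $|\nabla_{g(T)}\rho|_{g(T)}\le C_2$ for some $C_2>0$. Combining the two, $|\nabla_{g(t)}\rho|^2_{g(t)}=\langle g^{-1}(t)\,d\rho,d\rho\rangle\le C_2^{\,2}\,(T/t)^{1/2}$ for all $t\in(0,T]$.

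With the ansatz we get $\partial_t\eta=\phi'(t)\,\rho^2$ and $|\nabla_{g(t)}\eta|^2_{g(t)}=4\,\phi(t)^2\rho^2\,|\nabla_{g(t)}\rho|^2_{g(t)}\le 4C_2^{\,2}T^{1/2}\,t^{-1/2}\,\phi(t)^2\rho^2$, so $\partial_t\eta\ge L_1|\nabla\eta|^2$ reduces to the scalar inequality $\phi'(t)\ge A\,t^{-1/2}\phi(t)^2$ with $A:=4L_1C_2^{\,2}T^{1/2}$. \emph{This is the crux of the lemma, and the only place the smallness of the Ricci bound is used}: the right-hand side carries only the integrable singularity $t^{-1/2}$, so the equality case $\phi'=A\,t^{-1/2}\phi^2$ has the explicit solution $\phi(t)=(d-2A\,t^{1/2})^{-1}$, which is positive and increasing on all of $[0,\tau]$ as soon as $2A\sqrt\tau<d$. (Had the hypothesis read $|\mathrm{Ric}|\le c/t$ with $c\ge\tfrac12$, the gradient term would be of size $t^{-1}$, the corresponding ODE would force $\phi\to\infty$ as $t\to0^+$, and no such $\eta$ could exist; this is the structural reason the bound $\epsilon$ in Theorem \ref{real} must be small.)

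It then remains to fix the constants. Take $d=1/L_2$, so that $\phi$ is increasing with $\phi(t)\ge\phi(0)=L_2$ and hence $\eta(x,t)=\phi(t)\rho(x)^2\ge L_2(1+r(x))^2\ge L_2\,r(x)^2$; and take $\tau\le T$ small enough that $2A\sqrt\tau<1/L_2$, which makes $\phi$, and therefore $\eta$, well defined, nonnegative, and smooth on $M\times(0,\tau]$ and continuous up to $t=0$. (Since $t\mapsto t^{1/2}$ fails to be $C^1$ at the origin, $\eta$ is genuinely only smooth for $t>0$, which is all that the subsequent energy argument uses and is consistent with the inequalities being asserted on $M\times(0,\tau]$.) This yields the lemma, with $\tau$ depending on $L_1,L_2,T$ and the constant $C_2$ from \cite{TamExh}. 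Apart from the ODE step, the argument is routine; the one point requiring care is that the exact constant $\tfrac14$ in the hypothesis is precisely what makes $|\nabla_{g(t)}\rho|^2$ integrable in $t$ and the ODE solvable on a genuine time interval.
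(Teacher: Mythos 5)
Your proof is correct and follows essentially the same route as the paper: the separated ansatz $\eta=f(t)\rho(x)^2$ with a smoothed distance function for $g(T)$, the metric comparison $(t/T)^{1/2}g(T)\le g(t)\le (T/t)^{1/2}g(T)$ forced by $|\mathrm{Ric}|\le 1/(4t)$, and the explicit solution $f(t)=(a-c\sqrt{t}\,)^{-1}$ of the resulting scalar ODE $f'\ge c\,t^{-1/2}f^2$, with $a$ and $\tau$ tuned to $L_1,L_2,T$ (your bookkeeping even retains the factor $4\rho^2$ in $|\nabla\eta|^2$ that the paper's display elides). The only substantive difference is the source of $\rho$: you invoke \cite[Theorem 1]{TamExh}, which requires $g(T)$ to have bounded curvature --- a hypothesis not contained in the lemma (its assumption only bounds $\mathrm{Ric}(g(T))$), although it does hold where the lemma is applied --- whereas the paper uses \cite[Proposition 2.1]{GW}, which smooths $d_{g(T)}(\cdot,p)$ with $|\nabla\rho|_{g(T)}\le 2$ on any complete manifold and thus proves the lemma exactly as stated; since you only use the gradient bound on $\rho$, replacing the citation removes this small extra hypothesis.
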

\begin{proof}
By \cite[proposition 2.1]{GW}, there exists smooth function $\rho:M \rightarrow [0,+\infty)$ such that $d_{g(T)}(x,p)\leq \rho(x)\leq d_{g(T)}(x,p)+1$ and $| ^{g(T)}\nabla \rho|_{g(T)}\leq 2$. Due to the curvature assumption, for any $t\in (0,T]$,
\begin{align}\label{zerometric}\sqrt{\frac{t}{T}}\,g(T)\leq g(t)\leq \sqrt{\frac{T}{t}} \, g(T).
\end{align}

Let $\eta(x,t)= f(t) \rho^2(x)$, then $\frac{\partial \eta}{\partial t}=f'(t)\rho^2 $ while
$$|\nabla \eta|^2=f^2(t)|\nabla \rho|^2= f^2 g^{ij} \partial_i \rho \,\partial_j \rho\leq 2\sqrt{\frac{T}{t}} f^2.$$
Hence if we take 
$$f(t)=\frac{1}{a-4L_1\sqrt{T}\sqrt{t} }$$
then the first inequality holds. The second inequality holds when we choose $a$ and $\tau$ small enough.
\end{proof}

\subsection{Estimates on curvature and its derivatives}
In this subsection, we use a result in \cite{chen} to modify estimate of $|Rm(g(t))|$ in term of $d_t(x,p)$ if $|Rm(g(t))|<at^{-1}$ and initial curvature is of polynomial growth. 

\begin{lma}
Suppose $(M,g(t))$ is a complete solution of the Ricci flow on $M\times [0,T]$ which satisfies
$$|Rm|_{g(0)}\leq C_1 (d_0^m(x,p)+1)\quad\text{and}\quad |Rm|_{g(t)}\leq \frac{a}{t}\quad\text{on}\;\;M\times (0,T]$$
for some $a,m,C_1>0$, $p\in M$. Then there exists $C_2=C_2(n,m,C_1,a)>0$ such that
$$|Rm|(x,t)\leq C_2 (d_t(x,p)^m+1).$$
\end{lma}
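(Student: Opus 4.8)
The plan is to exploit the fact that the bound $|Rm(g(t))|\le a/t$ already controls $|Rm|(x,t)$ whenever $t$ is bounded below, so that the only genuine issue is small $t$; at small times the polynomial bound on $|Rm(g_0)|$ has to be ``propagated'' a definite amount of time, which is exactly what a local curvature estimate of Chen \cite{chen} supplies, and the passage between $d_0$ and $d_t$ is handled by distance distortion. So the three ingredients are: (1) the trivial bound $a/t$ for $t\gtrsim 1$; (2) distance distortion under $Ric(g(t))\le (n-1)(a/t)g(t)$; (3) Chen's local curvature estimate, which turns an initial curvature bound on a ball into a curvature bound at the center for a definite time, with no noncollapsing hypothesis, precisely because $|Rm|\le a/t$ is scale invariant.

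First I would fix $(x,t_0)\in M\times(0,T]$ and set $\rho=d_{g(t_0)}(x,p)$, with the goal of producing $C_2=C_2(n,m,C_1,a)$ so that $|Rm|(x,t_0)\le C_2(\rho^m+1)$. If $t_0\ge 1$ this is immediate since $|Rm|(x,t_0)\le a/t_0\le a\le a(\rho^m+1)$, so assume $t_0<1$. Since $Ric(g(t))\le (n-1)(a/t)\,g(t)$, the distance distortion estimates available under this bound (applied on balls of radius $\sim\sqrt{t/a}$ and integrated over $[0,t_0]$, as in \cite{chen}) give $d_{g(0)}(x,p)\le\rho+C_3$ for some $C_3=C_3(n,a)$. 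Hence every $y\in B_{g(0)}(x,1)$ satisfies $d_{g(0)}(y,p)\le\rho+C_3+1$, and therefore
$$|Rm(g(0))|\le C_1\big((\rho+C_3+1)^m+1\big)\le C_4(n,m,C_1,a)\,(\rho^m+1)=:\sigma\qquad\text{on }B_{g(0)}(x,1),$$
where I arrange $C_4\ge 1$ so that $\sigma\ge 1$ and $B_{g(0)}(x,\sigma^{-1/2})\subseteq B_{g(0)}(x,1)$.

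Then I would invoke Chen's local curvature estimate \cite{chen}: there are $C=C(n,a)>0$ and $c=c(n,a)\in(0,1]$ such that, since $|Rm(g(t))|\le a/t$ on $M\times(0,T]$ and $|Rm(g(0))|\le\sigma$ on $B_{g(0)}(x,\sigma^{-1/2})$, one has $|Rm(g(t))|(x)\le C\sigma$ for all $t\in(0,\min\{T,c/\sigma\}]$. If $t_0\le c/\sigma$ this gives $|Rm|(x,t_0)\le C\sigma=CC_4(\rho^m+1)$; if instead $t_0>c/\sigma$ then $|Rm|(x,t_0)\le a/t_0<(a/c)\sigma=(a/c)C_4(\rho^m+1)$. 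In every case $|Rm|(x,t_0)\le C_2(\rho^m+1)$ with $C_2=\max\{a,\,CC_4,\,(a/c)C_4\}$, depending only on $n,m,C_1,a$ (the case $t_0=0$ being the hypothesis). The substantive input, and the main obstacle, is Chen's estimate itself: without noncollapsing nothing local prevents an instantaneous jump of the curvature, and it is the scale-invariant bound $|Rm|\le a/t$, fed into a point-picking/compactness argument, that rules this out; the remaining work is just the bookkeeping needed to apply it at scale $\sigma\sim\rho^m+1$ — in particular verifying the radius condition $\sigma^{-1/2}\le 1$ and controlling $d_0$ by $d_{t_0}$.
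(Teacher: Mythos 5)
Your proposal is correct and follows essentially the same route as the paper: Chen's local curvature estimate \cite[Corollary 3.2]{chen} applied on an initial-time ball where the polynomial bound gives $|Rm(g_0)|\le\sigma$, combined with the Hamilton--Perelman distance distortion estimate $d_t(x,p)\ge d_0(x,p)-\beta_n\sqrt{at}$ (the paper uses it as $2d_t\ge d_0$ for $d_0$ large, you as $d_0\le d_{t_0}+C_3$). The only differences are bookkeeping --- you center the ball at $x$ rather than at $p$ and explicitly dispose of the time restriction $t\le c/\sigma$ via the $a/t$ bound, whereas the paper splits into $d_0(x,p)\ge L$ and a compact remainder --- so the two arguments are essentially identical.
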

\begin{proof}
Let $L>0$ be a fixed constant first. Let $x\in M$ so that $d_0(x,p)=R \geq L$. On $B_0(p,2R)$,
$$|Rm|_{g(0)}\leq C_1(2^m R^m+1)=r^{-2}.$$
We may assume $L=L(m,C_1)$ to be sufficiently large so that $r \leq R$. Then by \cite[Corollary 3.2]{chen},
$$|Rm|(x,t)\leq e^{C_n a}r^{-2}=C_1e^{C_n a}(2^m R^m+1),\;\quad\forall t\in [0,T].$$
But since $Ric\leq a(n-1)t^{-1}$ on $M\times (0,T]$, by \cite[Theorem 17.2]{Ham1995} (see also \cite[Lemma 8.3]{P}),
$$d_t(x,p) \geq d_0(x,p)-\beta_n \sqrt{at}.$$
Hence if we choose $L=L(a,n,C_1,m)$ even larger, then $2d_t(x,p)\geq d_0(x,p)$. Hence, $\exists C_2=C_2(n,a,C_1,m)>0$ such that  
$$|Rm|(x,t)\leq C_2 (d_t(x,p)^m+1)\quad\text{whenever}\; d_0(x,p)\geq L.$$
On the other hand, the curvature is bounded on $B_0(p,L)$. Use Chen's result again, we get the upper bound of $|Rm|_{g(t)}$ on $B_0(p,L)$. So by choosing a larger $C_2$, we conclude that
$$|Rm|(x,t)\leq C_2(d_t(x,p)^m+1)\quad\forall (x,t)\in M\times [0,T].$$
\end{proof}

By Shi-estimate, we can also obtain a estimate of $|\nabla Rm|$ in term of spatial information.

\begin{lma}\label{3.3}
Suppose $(M,g(t))$ is a complete solution to the Ricci flow $M\times  [0,T]$ which satisfies 
$$|Rm|_{g(0)}\leq C_1 (d_0^m(x,p)+1)\quad\text{and}\quad |Rm|_{g(t)}\leq \frac{a}{t}\quad\text{on}\;\;M\times (0,T]$$
for some $a,m,C_1>0$, $p\in M$. Then there exists $C_3=C_3(T,n,m,a,C_1)>0$ such that 
$$|\nabla Rm|_{g(t)}\leq \frac{C_3 [1+d_t(x,p)^m]^{3/2}}{\sqrt{t}}\quad \text{on}\;\;M \times (0,T].$$
\end{lma}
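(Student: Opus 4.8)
The plan is to deduce the bound from Shi's local first-order derivative estimate, applied on a parabolic neighbourhood of scale $\sqrt t$ about the point in question, using the polynomial-in-$d_t$ curvature bound $|\Rm|(x,t)\le C_2(1+d_t(x,p)^m)$ furnished by the preceding lemma. Recall Shi's estimate in the form: there is $C_n>0$ such that if a Ricci flow $(N,\bar g(s))_{s\in[0,\sigma]}$ satisfies $|\Rm|\le K$ on $B_{\bar g(0)}(x_0,r)\times[0,\sigma]$ (the ball having compact closure, which is automatic since $M$ is complete), then $|\nabla\Rm|(x_0,s)\le C_n\,K\,\big(r^{-2}+s^{-1}+K\big)^{1/2}$ for all $s\in(0,\sigma]$. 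Fixing $(x,t)\in M\times(0,T]$, I would apply this to the time-translated flow $s\mapsto g(s+t/2)$, $s\in[0,t/2]$, on the ball $B_{g(t/2)}(x,\sqrt t)$; since $(x,t)$ corresponds to elapsed time $t/2$ and $r=\sqrt t$, the conclusion will read $|\nabla\Rm|(x,t)\le C_n\,K\,(3t^{-1}+K)^{1/2}$, provided $|\Rm|\le K$ holds on the cylinder $Q:=B_{g(t/2)}(x,\sqrt t)\times[t/2,t]$.

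The heart of the argument is therefore to exhibit such a $K$, of the form $C(n,m,a,C_1,T)\,(1+d_t(x,p)^m)$, valid on all of $Q$. Here I would first note that $|\Rm|_{g(s)}\le a/s$ forces $|\Ric|_{g(s)}\le (n-1)a/s$, hence $-\tfrac{2(n-1)a}{s}\,g(s)\le\tfrac{\partial}{\partial s}g(s)\le\tfrac{2(n-1)a}{s}\,g(s)$; integrating in $s$ over $[t/2,t]$ shows that all the metrics $g(s)$, $s\in[t/2,t]$, are uniformly equivalent to $g(t/2)$ with a factor $\Lambda=\Lambda(n,a)$, and consequently that $d_s$ and $d_t$ are comparable up to $\Lambda$ on that interval — this is the only place the $a/t$ bound is used. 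Then for $y\in B_{g(t/2)}(x,\sqrt t)$ and $s\in[t/2,t]$ one gets $d_s(y,p)\le\Lambda\big(\sqrt t+\Lambda\,d_t(x,p)\big)$, and feeding this into the preceding lemma gives $|\Rm|(y,s)\le C_2\big(\Lambda^{2m}(\sqrt t+d_t(x,p))^m+1\big)\le K$ on $Q$, with $K:=C_4\,(1+d_t(x,p)^m)$ and $C_4=C_4(n,m,a,C_1,T)$ (the dependence on $T$ entering only through $\sqrt t\le\sqrt T$).

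It then remains to feed $r=\sqrt t$ and this $K$ into Shi's estimate. Since $t\le T$ one has $K\le C_4T\,(1+d_t(x,p)^m)/t$, hence $3t^{-1}+K\le C_5\,(1+d_t(x,p)^m)/t$, and therefore $|\nabla\Rm|(x,t)\le C_n\,K\,\big(C_5(1+d_t(x,p)^m)/t\big)^{1/2}\le C_3\,(1+d_t(x,p)^m)^{3/2}/\sqrt t$, which is the asserted bound with $C_3=C_3(T,n,m,a,C_1)$. The only point that really needs care — and what I expect to be the main (mild) obstacle — is to keep the polynomial growth and the $1/t$ blow-up cleanly separated: the factor $K$ standing outside the square root must be the $t$-independent quantity $1+d_t^m$ rather than $a/t$, for otherwise one loses a power of $t$ and ends up with $t^{-3/2}$ in place of the claimed $t^{-1/2}$. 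Everything else is the routine bookkeeping of controlling the curvature uniformly over the parabolic cylinder $Q$ via the time-dependent distance comparison above.
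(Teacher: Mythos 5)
Your argument is correct and follows essentially the same route as the paper: control the curvature on a parabolic neighbourhood by $C(1+d_t(x,p)^m)$ using the preceding lemma together with the distance comparison coming from the $a/t$ Ricci bound, then apply Shi's local first-order estimate (\cite[Theorem 5.8]{HRF}) with time scale comparable to $t$ so that the $1/t$ term and the $t$-independent factor $1+d_t^m$ stay separated. The only difference is cosmetic: you take cylinders $B_{g(t/2)}(x,\sqrt t)\times[t/2,t]$ centered at the point itself, whereas the paper works on balls centered at $p$ of radius comparable to $d_t(x,p)$ and then converts to a pointwise bound; both yield the same constant structure $C_3(T,n,m,a,C_1)$.
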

\begin{proof}
Since $Ric \leq a(n-1)t^{-1}$ on $(0,T]$, we conclude by integrating time that for $s\in [t/2,t]$,
$$\lambda^{-1}d_t(x,p)\leq d_{s}(x,p)\leq \lambda d_t(x,p)\quad\text{where}\;\;\lambda=2^{a(n-1)}.$$

Now we fix $t_0>0$, take $\epsilon=t_0/2$ and $R\geq 1$. For $t\in [\epsilon,2\epsilon]$, $B_\epsilon (p, 2 \lambda R)\subset B_{t}(p,2\lambda^2 R)$. Thus, on $B_\epsilon (p,2R)\times [\epsilon,2\epsilon]$,
\begin{align}\label{CC}|Rm|(z,t)\leq C_2[1+(2\lambda^2 R)^m]\leq K=C_3(R^m+1)\end{align}

Using \eqref{CC} with Shi's first order estimate \cite[Theorem 5.8]{HRF}, one can conclude that on $B_\epsilon (p,\lambda R)\times (\epsilon,2\epsilon]$,
\begin{align}\label{11}
|\nabla Rm|(z,t)\leq C_nK\left(\frac{1}{\lambda^2R^2}+\frac{1}{t-\epsilon}+K \right)^{1/2}.
\end{align}
Put $t=2\epsilon=t_0$ into \eqref{11},
\begin{equation}\begin{split}
|\nabla Rm|(z,2\epsilon)&\leq C(R^m+1)\left[\frac{1}{\lambda^2R^2}+\frac{1}{\epsilon}+(R^m+1)\right]^{1/2}\\
&\leq \frac{C_3 (R^m+1)^{3/2}}{t_0^{1/2}}.
\end{split}
\end{equation}
where $C_3=C_3(n,m,a,C_1,T)>0$. As $t_0>0$ is arbitrary, we know that if $R\geq 1$, then
\begin{align}|\nabla Rm|(x,t)\leq \frac{C_3 (R^m+1)^{3/2}}{\sqrt{t}}\quad \text{on}\;\;B_t(p,R).\end{align}
Now we wish to get pointwise estimate. If $d_t(x,p)\leq 1$, then
\begin{align*}
|\nabla Rm|(x,t) \leq \frac{4C_3}{\sqrt{t}}.
\end{align*}
If $d_t(x,p)=R\geq 1$, $x\in B_t(p,2R)$. And hence,
\begin{align*}
|\nabla Rm|(x,t) \leq \frac{C_3 [(2R)^m+1]^{3/2}}{\sqrt{t}}\leq \frac{C_4(1+d_t(x,p)^m)^{3/2}}{\sqrt{t}}.
\end{align*}
\end{proof}
\noindent
Combines the above result and interpolates with Shi-estimate \cite[Theorem 5.8]{HRF} in its standard form, we have the following estimate.
\begin{cor}\label{esti}
Suppose $(M,g(t))$ is a complete Ricci flow for $t\in [0,T]$ satisfying 
\begin{align}|Rm|_{g(0)}\leq C_1 (d_0^m(x,p)+1)\quad\text{and}\quad |Rm|_{g(t)}\leq \frac{a}{t}\quad\text{on}\;\;M\times (0,T]\end{align}
for some $a,m,C_1>0$, $p\in M$. Then there exists $C_3=C_3(T,n,m,a,C_1)>0$ such that for any $x\in M$, $t\in(0,T]$, $\delta\in [0,1]$,
\begin{align}|Rm(g(t))|\leq C_3\left(d_T(x,p)+1\right)^{m\delta} \left(\frac{1}{t}\right)^{1-\delta}
\end{align}
and 
\begin{align}|\nabla Rm(g(t))|\leq C_3\frac{(d_T(x,p)+1)^{3\delta m/2}}{t^{3/2-\delta}}.\end{align}
\end{cor}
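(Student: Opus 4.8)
The plan is to derive both estimates of the corollary by interpolating the \emph{spatial} curvature bounds supplied by the two preceding lemmas against the \emph{temporal} bound $|Rm|_{g(t)}\le a/t$ (and its first-derivative analogue from Shi's estimate), via the elementary observation that if a nonnegative quantity $X$ satisfies $X\le A$ and $X\le B$ with $A,B>0$, then $X=X^\delta X^{1-\delta}\le A^\delta B^{1-\delta}$ for every $\delta\in[0,1]$. The only genuine work beyond this bookkeeping is to replace the time-$t$ distance $d_t(x,p)$ that appears in those lemmas by the fixed reference distance $d_T(x,p)$, uniformly in $t\in(0,T]$.

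First I would record the distance comparison. Since $|Rm|_{g(t)}\le a/t$ forces $Ric\le (n-1)a/t$, the Hamilton--Perelman distance distortion estimate already used in the preceding lemmas (\cite[Theorem 17.2]{Ham1995}, \cite[Lemma 8.3]{P}), applied with the time-dependent bound $K=a/s$ and the radius optimized exactly as in the derivation of $d_t\ge d_0-\beta_n\sqrt{at}$, yields $\frac{d}{ds}d_s(x,p)\ge -c_n\sqrt{a/s}$ in the barrier sense. Integrating this single inequality over $[t,T]$ and using $\int_t^T s^{-1/2}\,ds\le 2\sqrt T$ gives the one-sided additive bound $d_t(x,p)\le d_T(x,p)+\beta_n\sqrt{aT}$, hence $d_t(x,p)+1\le C(d_T(x,p)+1)$ with $C=C(n,a,T)$ independent of $t$ and $x$. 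This upper bound on $d_t$ is all the estimates require, since the spatial distances enter only through positive powers; consequently $d_t^m+1\le C(d_T+1)^m$ and $(1+d_t^m)^{3/2}\le C(d_T+1)^{3m/2}$.

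For the curvature estimate I would combine the pointwise spatial bound $|Rm|(x,t)\le C_2(d_t(x,p)^m+1)$ from the previous lemma with the hypothesis $|Rm|\le a/t$; the interpolation inequality with exponent $\delta$, followed by the distance comparison, gives $|Rm|\le [C_2(d_t^m+1)]^\delta (a/t)^{1-\delta}\le C_3(d_T+1)^{m\delta}\,t^{-(1-\delta)}$, the first assertion. For the derivative estimate I first need the purely temporal bound $|\nabla Rm|\le C t^{-3/2}$: restarting the flow on $[t/2,t]$, where $|Rm|\le 2a/t$, and applying Shi's first-order estimate \cite[Theorem 5.8]{HRF} in its global form over an interval of length $t/2$ produces $|\nabla Rm|(\cdot,t)\le C_n(2a/t)((2a/t)+2/t)^{1/2}\le Ct^{-3/2}$. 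Interpolating this against the spatial bound $|\nabla Rm|\le C_3(1+d_t^m)^{3/2}t^{-1/2}$ of Lemma~\ref{3.3} with the same exponent $\delta$ and converting $d_t$ to $d_T$, the $t$-exponent becomes $-\delta/2-\tfrac32(1-\delta)=-(3/2-\delta)$, so $|\nabla Rm|\le C_3(d_T+1)^{3m\delta/2}\,t^{-(3/2-\delta)}$, as claimed.

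The main (and essentially the only) obstacle is the distance comparison step: one must obtain a constant relating $d_t$ to $d_T$ that is \emph{uniform} as $t\to 0$. A crude length-distortion argument would only give the multiplicative factor $(T/t)^{(n-1)a}$, which blows up and would leave spurious powers of $t$ in the final bounds. The point is that $|Ric|\le (n-1)a/t$ is integrable along the time axis after taking a square root, so the sharp additive Hamilton-optimized distortion $d_t\le d_T+\beta_n\sqrt{aT}$ survives down to $t=0$; this is precisely the mechanism already exploited to pass from $d_0$ to $d_t$ in the earlier lemmas, here run between the two positive comparison times $t$ and $T$.
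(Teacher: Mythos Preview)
Your proposal is correct and follows essentially the same route as the paper: interpolate the spatial bounds from the two preceding lemmas against the temporal bounds $|Rm|\le a/t$ and (via Shi on $[t/2,t]$) $|\nabla Rm|\le Ct^{-3/2}$, then convert $d_t$ to $d_T$ using the Hamilton--Perelman additive distance distortion $d_t\le d_T+C_n\sqrt{aT}$. The paper states the distance comparison in the equivalent form $d_T\ge d_t-C_n\sqrt{aT}$ and splits into the cases $d_t\ge 2C_n\sqrt{aT}$ and $d_t<2C_n\sqrt{aT}$, but this is the same argument as your bound $d_t+1\le C(d_T+1)$.
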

\begin{proof}
The proof on the estimates of $|Rm(g(t))|$ and $|\nabla Rm(g(t))|$ are same. We only work on $|Rm(g(t))|$ here. When $\delta=0$, it was already proved in \cite[Theorem 5.8]{HRF}. By interpolating it with Lemma \ref{3.3}, we have for any $x\in M$, $t\in (0,T]$, $\delta\in [0,1]$,
\begin{align}\label{000}|Rm(g(t))|\leq C\Big(d_t(x,p)+1\Big)^{m\delta}\left( \frac{1}{t}\right)^{1-\delta}.\end{align}
It suffices to compare $d_t(x,p)$ and $d_T(x,p)$.

By  \cite[Theorem 17.2]{Ham1995} again (see also \cite[Lemma 8.3]{P}), we have
$$d_T(x,p)\geq d_t(x,p)-C_n\sqrt{aT}.$$
If $d_t(x,p) \geq 2C_n\sqrt{aT}$, then $d_T(x,p)\geq \frac{1}{2} d_t(x,p)$. Substitutes it back to \eqref{000}, we have the desired result for $d_t(x,p)\geq 2C_n\sqrt{aT}$. 
If $d_t(x,p)<2C_n\sqrt{aT}$, we can choose a larger constant $C_3$ so that the conclusion holds.
\end{proof}

\subsection{Evolution for energy quantities}
Following the idea in \cite{KB}, we consider the following quantities 
$$h_{ij}=g_{ij}-\tilde g_{ij},\quad A_{ij}^k= \Gamma_{ij}^k-\tilde \Gamma_{ij}^k,\quad \text{and}\;\; S_{ijk}^l=R_{ijk}^l-\tilde R_{ijk}^l.$$
Our goal is to show that all these three quantities vanishes on $[0,T]$.
Now let us recall the evolution equation of $h,A$ and $S$ which can be found in \cite[Page 153-154]{KB}.
\begin{align*}
\partial_t h &= -2S_{ilj}^l,\\
\partial_t A&=g^{-1}*\nabla S+g^{-1}*A*\tilde R+g^{-1}*\tilde g^{-1} *h*\tilde \nabla\tilde R
\end{align*}
and
\begin{align*}
\partial_t S&=\Delta S+\nabla_a (g^{ab}\nabla_b \tilde R-\tilde g^{ab} \tilde\nabla_b \tilde R) + \tilde g^{-1}*A*\tilde \nabla\tilde R\\
&\quad +\tilde g^{-1}*\tilde g^{-1}*h *R*R+\tilde g^{-1}*S*R +\tilde g^{-1} *\tilde R*S.
\end{align*}

Denote $\lambda(x,t)\geq 1$ to be a function on $M\times [0,T]$ such that 
$$\lambda^{-1} \tilde g(t)\leq g(t)\leq \lambda\, \tilde g(t), \;\;\text{on}\;M\times [0,T].$$

For any $a,b>0$, $\bar h=t^{-a}h,\, \bar A=t^{-b} A$ and $|S|_g^2$ satisfies 
\begin{align}
\partial_t (|\bar h|^2)&\leq C_n |R| |\bar h|^2+C_nt^{-a} |S||\bar h|\label{1}\\
\partial_t (|\bar A|^2)&\leq C_n|R||\bar A|^2+C_nt^{-b}|\nabla S||\bar A|+ C_n \lambda^2|\bar A|^2|\tilde R|_{\tilde g}\label{2}\\
&\quad +C_nt^{a-b}\lambda^{4} |\bar A||\bar h||\tilde\nabla\tilde R|_{\tilde g}\notag\\
\partial_t |S|^2&\leq 2\langle \Delta S+ div \,U,S\rangle+C_n\lambda^3 |S|^2 \big( |R|+|\tilde R|_{\tilde g}\big)\label{3}\\
&\quad +C_n \lambda^2t^a |\bar h||S||R|^2+C_n\lambda^4t^b |\bar A||S||\tilde\nabla\tilde R|_{\tilde g}\notag
\end{align}
where $(div \,U)^l_{ijk}=\nabla_a (g^{ab}\nabla_b \tilde R^l_{ijk}-\tilde g^{ab} \tilde\nabla_b \tilde R^l_{ijk})$.

Moreover,
\begin{align}\label{4}
|U|
&\leq C_n\lambda^4 t^b |\bar A||\tilde R|_{\tilde g}+C_n \lambda^4 t^a|\bar h| |\tilde \nabla\tilde R|_{\tilde g}
\end{align}

Here we use $|\cdot |$ and $|\cdot|_{\tilde g}$ to denote the norm with respect to metric $g(t)$ and $\tilde g(t)$ respectively.

In \cite{KB}, the author considered the uniqueness problem where the metrics are uniformly. Equivalently, $\lambda(x,t)$ is assumed to be uniformly bounded on $M\times[0,T]$. In the following, we show that under assumption of Theorem \ref{real}, $\lambda$ can be controlled in term of $d_t (\cdot ,p)$.

\begin{lma}\label{equiv}
Under the assumption of theorem \ref{real}, there exists $C=C(\epsilon,n,m,K,C_0,T)>0$ such that 
$$C^{-1}(d_T(x,p)+1)^{-2m\epsilon} g_0\leq g(t)\leq C(d_T(x,p)+1)^{2m\epsilon} g_0$$
for all $(x,t)\in M\times [0,T]$. In particular, we can pick 
$$\lambda(x,t)= C^2(d_T(x,p)+1)^{4m\epsilon}.$$
\end{lma}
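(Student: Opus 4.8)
The plan is to control the ratio $g(t)/g_0$ by integrating the Ricci flow equation in time, using the bound $|Ric(g(t))| \le \epsilon/t$ to get the growth of the metric, and then using Corollary \ref{esti} to reabsorb the time singularity into a polynomial-in-$d_T$ factor. Concretely, fix $x \in M$ and a tangent vector $v$, and write $\phi(t) = \log g(t)(v,v)$. From $\partial_t g = -2Ric$ we get $|\phi'(t)| = 2|Ric(g(t))(v,v)|/g(t)(v,v) \le 2|Ric(g(t))|_{g(t)}$. Naively this is $\le 2\epsilon/t$, whose integral from $0$ diverges; the point is that near $t=0$ the curvature is actually bounded (or grows only like a power of $d_0(x,p)$ by hypothesis), so one should split the integral at a point $t_0 = t_0(x)$ chosen so that for $t \le t_0$ the initial-type polynomial bound dominates and for $t \ge t_0$ the $\epsilon/t$ bound is used. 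For the range $[t_0, t]$ the contribution is $2\epsilon \log(t/t_0)$, i.e. $(t/t_0)^{2\epsilon}$; choosing $t_0 \sim (d_0(x,p)+1)^{-2m}$ (the scale at which $C_0(d_0+1)^m$ and $\epsilon/t_0$ are comparable) makes $(t/t_0)^{2\epsilon} \le (d_0(x,p)+1)^{4m\epsilon} t^{2\epsilon}$, and the $t^{2\epsilon}$ factor is harmless on $[0,T]$. For the range $[0,t_0]$ one uses that $|Rm(g(s))| \le C(d_0+1)^m$ there (this is essentially Chen's estimate, exactly as in the two curvature lemmas above, combined with $|Rm(g_0)| \le C_0(d_0+1)^m$), so $\int_0^{t_0} 2|Ric|\,ds \le 2(n-1)C(d_0+1)^m \cdot t_0 \le 2(n-1)C$, a uniform constant. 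Finally one replaces $d_0(x,p)$ by $d_T(x,p)$ using the distance-distortion estimate already invoked in the excerpt (Hamilton's Theorem 17.2, together with $Ric \le \epsilon(n-1)/t$, giving $d_T(x,p) \ge d_0(x,p) - C_n\sqrt{\epsilon T}$ and, after enlarging constants, $d_0 \le 2 d_T + C$); this swallows the discrepancy into the constant $C$.

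Putting these pieces together gives $C^{-1}(d_T(x,p)+1)^{-2m\epsilon} g_0 \le g(t) \le C(d_T(x,p)+1)^{2m\epsilon} g_0$ with $C = C(\epsilon, n, m, K, C_0, T)$, and then $\lambda(x,t) = C^2(d_T(x,p)+1)^{4m\epsilon}$ works as a pointwise comparison function between $g(t)$ and $\tilde g(t)$: indeed $g(t) \le C(d_T+1)^{2m\epsilon} g_0 \le C^2(d_T+1)^{4m\epsilon}\tilde g(t)$ using the lower bound for $\tilde g(t)$ in terms of $g_0$, which holds by the identical argument applied to $\tilde g$ (whose Ricci and Riemann bounds are the same by hypothesis). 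One subtlety to be careful about: the distance $d_T$ is measured with respect to $g(T)$, while the symmetric argument for $\tilde g$ would naturally produce $\tilde d_T$; but since $g(T)$ and $\tilde g(T)$ are themselves comparable (they have bounded curvature at time $T$ and agree initially — or more directly, once we have the estimate with $d_T$ for $g$ we can feed it back), the two distance functions are comparable up to the same type of polynomial factor, which can again be absorbed.

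The main obstacle is the bootstrapping near $t=0$: one needs that the curvature of each flow is genuinely controlled by the initial polynomial bound $C_0(d_0+1)^m$ on a definite time interval $[0,t_0(x)]$ with $t_0(x)$ as large as $(d_0(x,p)+1)^{-2m}$, rather than only on a fixed interval independent of $x$. This is exactly what Chen's pseudolocality-type estimate (\cite[Corollary 3.2]{chen}, as used in the curvature lemmas preceding this lemma) provides — the scale $r$ there is set by $r^{-2} = C_1(d_0+1)^m$, so the estimate persists up to time comparable to $r^2 \sim (d_0+1)^{-m}$; one must check that this, combined with the $a/t$ bound for later times, chains together to cover all of $(0,T]$ and that the constants only depend on the allowed parameters. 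The role of $\epsilon$ being small is that it keeps the exponent $4m\epsilon$ small enough that the resulting $\lambda(x,t)$ grows slowly — this is what will later let the energy estimate close — but for the statement of this lemma alone, any $\epsilon$ works; the smallness is only exploited in the subsequent steps of the proof of Theorem \ref{real}.
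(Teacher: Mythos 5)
Your proposal is in substance the paper's own argument: the paper also integrates the Ricci flow equation against a bound obtained by combining the $\epsilon/t$ Ricci bound with the polynomial spatial curvature bound coming from Chen's estimate, only it does so through Corollary \ref{esti} (interpolation with exponent $\delta$) and then optimizes $\delta\sim 1/\log (C_1\epsilon^{-1}r^m)$, which is the same device as your splitting of the time integral at the crossover time. Also, the ``main obstacle'' you flag is already settled by the first curvature lemma of Section 3 (equivalently Corollary \ref{esti} with $\delta=1$): it gives $|Rm(g(t))|\le C\big(d_t(x,p)^m+1\big)$ on all of $[0,T]$, so no chaining of Chen's estimate up to $t_0(x)$ is needed; and since both flows start at the same $g_0$, one can state both comparisons in terms of $d_0$ and convert to $d_{g(T)}$ at the end via Hamilton's distance estimate, which disposes of the $d_T$ versus $\tilde d_T$ subtlety more directly than your ``feed it back'' remark.

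One quantitative slip: the crossover between $C_0(d_0+1)^m$ and $\epsilon/t$ occurs at $t_0\sim (d_0+1)^{-m}$, not $(d_0+1)^{-2m}$ as you wrote. With your choice of $t_0$ the late-time factor $(t/t_0)^{2\epsilon}$ is $(d_0+1)^{4m\epsilon}t^{2\epsilon}$, so as written you prove the metric comparison with exponent $4m\epsilon$ and hence $\lambda\sim (d_T+1)^{8m\epsilon}$, which is weaker than the stated $2m\epsilon$ and $4m\epsilon$. Taking $t_0$ equal to the actual crossover scale recovers the stated exponents; the weaker version would only force re-tuning $\epsilon(m)$ in the proof of Theorem \ref{real}, so the slip is harmless in spirit but should be corrected to match the lemma as stated.
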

\begin{proof}
Due to Corollary \ref{esti}, when $d_T(x,p) \leq r$ where $r\geq 1$, we have
$$|Ric(g(t))|\leq \left(\frac{\epsilon}{t}\right)^{1-\delta}\left(C_1 r^m\right)^{\delta},\;\;\forall \delta \in [0,1]$$
for some $C_1=C_1(K,n,C_0,m,T)>0$. By integrating it and using the Ricci flow equation, we therefore conclude that for any $\delta\in (0,1]$,
\begin{align}\label{2222}\exp\left[-2\delta^{-1}\epsilon (C_1\epsilon^{-1}r^m)^\delta\right] g_0\leq g(t)\leq \exp\left[2\delta^{-1}\epsilon (C_1\epsilon^{-1}r^m)^\delta\right] g_0.\end{align}
\noindent\\
Now we choose $\delta\in (0,1]$ so that $\delta \cdot \log (C_1\epsilon^{-1} r^m)=\frac{1}{10}$. Noted that $\delta\in (0,1]$ if $r$ is sufficiently large. Then \eqref{2222} becomes
\begin{align}(C_1\epsilon^{-1} r^m)^{-2\epsilon }g_0\leq g(t)\leq (C_1\epsilon^{-1} r^m)^{2\epsilon }g_0.\end{align}
\end{proof}

\subsection{Energy argument}
\begin{proof}[Proof of theorem \ref{real}]
We will modify the estimates so that we can apply the energy argument used in \cite{KB}.

Let $r>>1$ be a sufficiently large constant. By \cite[proposition 2.1]{GW}, there is a smooth function $\rho$ in which $|^{g(T)}\nabla \rho|_{g(T)}\leq 2$ and $|\rho-d_{g(T)}|\leq 1$. Define $\phi(x)=\Phi (\rho(x)/r)$ where $\Phi$ is a smooth non-increasing function defined on $[0,+\infty)$ which is identical to $1$ on $[0,1]$, vanishes outside $[0,2]$ and satisfies $|\Phi'|^2\leq  100\Phi$. Let $\eta$ be the function obtained from Lemma \ref{exh} with $L_i$ to be chosen later. Define the energy quantity to be 
$$\mathcal{E}(x,t)=|\bar h|^2+|\bar A|^2+|S|^2\quad\text{and}\quad E_r(t)=\int_M \mathcal{E}(x,t)e^{-\eta(x,t)}\phi(x)\,d\mu_t.$$
We will specify the choice of $a,b,\epsilon,L_1$ and $L_2$ later.

We would like to point out that in \cite{KB}, the author defined the energy quantity using the same quantity $\mathcal{E}(x,t)$ but with sightly different choice of $a$ and $b$. Moreover, the cutoff function and the estimates in \cite{KB} are in term of $g_0$ while the cutoff function here and estimates in Corollary \ref{esti} are with respect to $g(T)$ and $\tilde g(T)$.

By Corollary \ref{esti}, there exists $C_1=C_1(n,C_0,T,m,K)>0$ such that for any $x\in supp(\phi)$, $t\in (0,T]$, $\delta\in [0,1]$,
\begin{align}\label{5}|R|_g+|\tilde R|_{\tilde g}\leq C_1 \frac{r^{m\delta}}{t^{1-\delta}}\end{align}
and 
\begin{align}\label{6}|\nabla R|_g +|\tilde\nabla \tilde R|_{\tilde g}\leq C_1\frac{r^{3\delta m/2}}{t^{3/2-\delta}}.\end{align}

\noindent On the other hand, by Lemma \ref{equiv}
\begin{align}\label{7}\lambda(x,t) \leq  C_1 r^{4m\epsilon }.\end{align}
For notational convenience, we will denote $N$ to be an generic constant depending only on $n,C_0,T,m,K$ for our convenience. It may vary from line to line.

By substituting \eqref{5}- \eqref{7} into \eqref{1}-\eqref{4} and applying Cauchy inequalities on equations \eqref{1}-\eqref{4}, 
we have on $supp (\phi)\subset B_T(p,3r)$,
\begin{equation}
\begin{split}
\frac{\partial }{\partial t}|\bar h|^2&\leq N\frac{r^{m\delta}}{t^{1-\delta}}|\bar h|^2+\frac{N}{t^a}|S|^2+\frac{N}{t^a} |\bar h|^2,\\
\frac{\partial }{\partial t}|\bar A|^2&\leq N\frac{r^{8m\epsilon+2\delta m}}{t^{\frac{3}{2}-\delta+b-a}}|\bar h|^2+N|\bar A|^2\left[\frac{r^{m\delta}}{t^{1-\delta}}+\frac{1}{t^{2b}}+\frac{r^{m\delta+8m\epsilon}}{t^{1-\delta}}+\frac{r^{8m\epsilon+2\delta m}}{t^{\frac{3}{2}-\delta+b-a}}\right]\\
&\quad+|\nabla S|^2,\\
\frac{\partial}{\partial t} |S|^2&\leq 2\langle \Delta S+div \,U,S\rangle+N\frac{r^{16m\epsilon+2\delta m}}{t^{\frac{3}{2}-\delta-b}}|\bar A|^2+N\frac{r^{8m\epsilon+m\delta}}{t^{1-\delta-a}}|\bar h|^2\\
&\quad +N|S|^2\left[\frac{r^{12m\epsilon+m\delta}}{t^{1-\delta}}+\frac{r^{16m\epsilon+2\delta m}}{t^{\frac{3}{2}-\delta-b}}\right]\\
|U|^2&\leq N\frac{r^{32m\epsilon+2m\delta}}{t^{2-2\delta-2b}}|\bar A|^2+N\frac{r^{32m\epsilon+4\delta m}}{t^{3-2\delta-2a}}|\bar h|^2
\end{split}
\end{equation}
where $\delta$ can be any number in $[0,1]$. Now we specify our choice of $a,b,\delta$ and $\epsilon$. Choose 
\begin{enumerate}
\item[(a)] $\delta=\frac{1}{4m}$, 
\item[(b)] $1>a>1-\delta$, 
\item[(c)] $\frac{1}{2}>b>\frac{1}{2}-\delta$ and 
\item[(d)] $\epsilon=\frac{1}{40m}$.
\end{enumerate} 

Now we are ready to get a differential inequality of $E_r(t)$. The calculation below is similar to the one in \cite[Page 172-174]{KB}. With the above choice of $a,b,\delta$ and $\epsilon$ together with estimate \eqref{5}, we have 
\begin{equation}
\begin{split}\label{EEq}
&\quad \frac{\partial}{\partial t}\int_M e^{-\eta}\phi \left( |\bar h|^2+|\bar A|^2\right)d\mu_t\\
&=\int_Me^{-\eta}\phi \, \partial_t  \left( |\bar h|^2+|\bar A|^2\right) - \frac{\partial\eta}{\partial t} e^{-\eta}\phi \left( |\bar h|^2+|\bar A|^2\right) d\mu_t\\
&\quad - \int_M S_{g}\,e^{-\eta}\phi \left( |\bar h|^2+|\bar A|^2\right)d\mu_t\\
&\leq \frac{Nr^2}{t^\gamma}E_r(t) +\int_M |\nabla S|^2 e^{-\eta}\phi \,d\mu_t
\end{split}
\end{equation}
on $(0,\tau]$ where we have used Lemma \ref{exh}. Similarly, we can also deduce that 
\begin{equation}
\begin{split}\label{EEEQ}
\frac{\partial}{\partial t}\int_M e^{-\eta}\phi  |S|^2d\mu_t
&\leq \frac{Nr^2}{t^\gamma}E_r(t)+\int_M\left(- \frac{\partial\eta}{\partial t}\right)|S|^2 e^{-\eta}\phi\,d\mu_t\\
&\quad +2\int_M e^{-\eta}\phi \langle \Delta S+div U,S\rangle \,d\mu_t\\
&\leq \frac{Nr^2}{t^\gamma}E_r(t)-\int_ML_1|\nabla\eta|^2|S|^2 e^{-\eta}\phi\,d\mu_t\\
&\quad +2\int_M e^{-\eta}\phi \langle \Delta S+div U,S\rangle \,d\mu_t.
\end{split}
\end{equation}
We apply integration by part on the last term yielding 
\begin{equation}
\begin{split}\label{EEEEQ}
&2\int_M\langle  \triangle S+\text{div}\;U,S \rangle\phi e^{-\eta}d\mu\\
&\leq -2\int_M |\nabla S|^2e^{-\eta}\phi d\mu_t\\
&\quad +2\int_M \Big[ |\nabla S||U|\phi+(|\nabla \eta|\phi+|\nabla \phi|)(|S||\nabla S|+|U||S|) \Big]e^{-\eta}d\mu.
\end{split}
\end{equation}
By Cauchy inequality again, we conclude that
\begin{equation}
\begin{split}\label{EEEEEQ}
&\quad 2|\nabla S||U|\phi+2(|\nabla \eta|\phi+|\nabla \phi|)(|S||\nabla S|+|U||S|)\\
&\leq N|U|^2\phi_r+N|\nabla \eta|^2|S|^2\phi_r+N\frac{|\nabla\phi|^2}{\phi}|S|^2+|\nabla S|^2\phi\\
&\leq |\nabla S|^2\phi+N|U|^2\phi+C_2|\nabla \eta|^2|S|^2\phi+N\frac{|\nabla\phi|^2}{\phi}|S|^2,
\end{split}
\end{equation}
for some constant $C_2=C_2(C_1,n,\sigma,\delta,T)$. We choose $L_1=C_2$ so that the term involving $|\nabla\eta|^2$ can be eliminated.

Combining \eqref{EEq}, \eqref{EEEQ}, \eqref{EEEEQ} and \eqref{EEEEEQ}, we arrive at the following inequality 
\begin{align*}
\partial_t E_r&\leq \frac{Nr^2}{t^\gamma}E_r+N\int_{A_{g(T)}(r,2r)}\frac{|\nabla\phi|^2}{\phi}|S|^2 e^{-\eta}d\mu
\end{align*}
on $(0,\tau]$ for some $\tau=\tau(L_2,n,m,C_0,K)>0$. \\

\noindent

In view of \eqref{zerometric}, we have the following estimate.
$$|\nabla \phi|^2= g^{ij}\partial_i \phi\cdot \partial_j \phi\leq \frac{N\phi}{r^2\sqrt{t}}.$$
Together with Corollary \ref{esti} and Lemma \ref{equiv}, the last term is bounded above by
$$N\int_{A_{g(T)}(r,2r)}\frac{|\nabla\phi|^2}{\phi}|S|^2 e^{-\eta}d\mu\leq \frac{N}{\sqrt{t}} e^{-L_2 r^2}V_T(B_T(p,2r))\leq \frac{N}{\sqrt{t}} e^{-L_2 r^2/2}$$
where we have used volume comparison theorem on $g(T)$. Hence, for $r$ sufficiently large
\begin{equation}
\begin{split}
\frac{\partial}{\partial t}E_r(t) \leq \frac{Nr^2}{t^\gamma} E_t +\frac{N}{\sqrt{t}}e^{-L_2r^2/2}
\end{split}
\end{equation}
And thus the function $\bar E_r(t)=\exp\left(-\frac{Nr^2t^{1-\gamma}}{1-\gamma} \right) \cdot E_t(t)$ satisfies
\begin{equation}
\begin{split}
\frac{\partial}{\partial t} \bar E_r(t)& \leq \exp\left(-\frac{Nr^2t^{1-\gamma}}{1-\gamma} -\frac{L_2r^2}{2}\right)\frac{N}{\sqrt{t}} \leq \frac{Ne^{-r^2}}{\sqrt{t}}.
\end{split}
\end{equation}
if we choose $L_2$ to be sufficiently large.
Since  $B_{g(T)}(p,r)$ is relatively compact and hence $B_{g(T)}(p,r)\subset \subset B_{g_0}(p,\bar r)$ for some $\bar r>>1$. By \cite[Lemma 10]{KB}, $\bar E_r(t)\rightarrow 0$ as $t\rightarrow 0^+$. By integrating \eqref{ode} from $s>0$ to $t\in [s,\tau]$, we get 
\begin{align}\label{ode}\bar E_r(t)\leq \bar E_r(s)+2Ne^{-r^2}\sqrt{t}.\end{align}
By letting $s\rightarrow 0^+$ and followed by $r\rightarrow \infty$, we conclude that $h\equiv 0$ on $[0,\tau]$. By the uniqueness theorem \cite{ChenZhu2006} or iterating the arguments, we can then conclude that $g(t)\equiv \tilde g(t)$ on $[0,T]$.

\end{proof}


\begin{thebibliography}{1000}

\bibitem{Cabezas-RivasWilking2011}
Cabezas-Rivas, E.; Wilking, B. {\sl How to produce a Ricci Flow via Cheeger-Gromoll exhaustion}, J. Eur. Math. Soc. (JEMS) \textbf{17} (2015), no. 12, 3153--3194, MR3429162, Zbl 1351.53078.


\bibitem{BamlerCabezasWilking2017}Cabezas-Rivas, E.; R. Bamler; Wilking, B., {\sl The Ricci flow under almost non-negative curvature conditions}, arXiv:1707.03002 (2017).






\bibitem{ChauLiTam} Chau, A.;  Li, K.-F.;  Tam, L.-F., {\sl Deforming complete Hermitian metrics with unbounded curvature},  Asian J. Math. 20 (2016), no. 2, 267--292, MR3480020, Zbl 1381.53135.

\bibitem{ChauTamLi} Chau, A.; Li, K.-F.; Tam, L.-F., {\sl Longtime existence of the \KR flow on $\mathbb{C}^n$}, Transactions of the American Mathematical Society 369.8 (2017): 5747-5768.

\bibitem{chen} Chen, B.-L., {\sl Strong uniqueness of the Ricci flow}, J. Differential Geom. 82 (2009), 363–382.

\bibitem{ChenZhu2006}Chen, B.-L.; Zhu, X.-P. {\sl Uniqueness of the Ricci flow on complete noncompact manifolds}, J. Differential Geom. 74(1) (2006), 119–154.





\bibitem{HRF} Chow, B.; Peng L.; Ni, L., {\sl Hamilton's Ricci flow}. Vol. 77. American Mathematical Soc., 2006.

\bibitem{Deturck} DeTurck, D.-M. {\sl Deforming metrics in the direction of their Ricci tensors}, Journal of Differential Geometry 18.1 (1983): 157-162.



\bibitem{fan}Fan X.-Q., {\sl A uniqueness result of \KR flow with an application}, Proceedings of the American Mathematical Society, 2007, 135(1): 289-298.

\bibitem{GT1} Giesen G.; P.M. Topping, {\sl Existence of Ricci flows of incomplete surfaces}. Communications in Partial Differential Equations, 2011, 36(10): 1860-1880.



\bibitem{GW} Greene, R.-E.; Wu, H., {\sl $ C^\infty $ approximations of convex, subharmonic, and plurisubharmonic functions}. Annales Scientifiques de l'\'Ecole Normale Sup\'erieure. Vol. 12. No. 1. Elsevier, 1979.

\bibitem{Ham1982} Hamilton R.-S., {\sl Three-manifolds with positive Ricci curvature}, Journal of Differential Geometry, 1982, 17(2): 255-306.

\bibitem{Ham1995}Hamilton R.-S., {\sl The formation of singularities in the Ricci flow}, Collection: Surveys in differential geometry, II, Cambridge, MA (1995), 7–136.


\bibitem{He2} He F. {\sl Existence and applications of Ricci flows via pseudolocality}, arXiv:1610.01735, 2016.

\bibitem{Hochard2016} Hochard R. {\sl Short-time existence of the Ricci flow on complete, non-collapsed $3 $-manifolds with Ricci curvature bounded from below}, arXiv:1603.08726, 2016.


\bibitem{HuangTam2018} Huang, S.-C.; Tam, L.-F., {\sl \KR flow with unbounded curvature}, Amer. J. Math. 140 (2018), no. 1, 189–220, MR3749193.

\bibitem{Kol} Koch H.; Lamm T. {\sl Geometric flows with rough initial data}. Asian Journal of Mathematics, 2012, 16(2): 209-235.



\bibitem{KB2} Kotschwar, B., {\sl Short-time persistence of bounded curvature under the Ricci flow} Math. Res. Lett. 24 (2017), no. 2, pp. 427--447.


\bibitem{KB}Kotschwar B. {\sl An energy approach to the problem of uniqueness for the Ricci flow}, Comm. Anal. Geom. 22 (2014), no. 1, 149--176.


\bibitem{Yi2018} Lai Y., {\sl Ricci flow under local almost non-negative curvature conditions}, arXiv:1804.08073.

\bibitem{LeeTam2017} Lee M.-C., Tam L.-F., {\sl Chern-Ricci flows on noncompact complex manifolds}, arXiv:1708.00141.

\bibitem{P} Perelman, G., The entropy formula for the Ricci flow and its geometric applications, arXiv:math.DG/0211159.


\bibitem{top}  P.-M. Topping, {\sl Uniqueness of instantaneously complete Ricci flows}. Geometry \& Topology, 2015, 19(3): 1477-1492.


\bibitem{SSS1} Schn\"urer O. C.; Schulze F.; Simon M. ,{\sl Stability of Euclidean space under Ricci flow}, Communications in Analysis and Geometry 16.1 (2008): 127-158.


\bibitem{SSS2} Schn\"urer O. C.; Schulze F.; Simon M. , {\sl Stability of hyperbolic space under Ricci flow}, Communications in Analysis and Geometry 19.5 (2011): 1023-1047.


\bibitem{SW}Sheng L.; Wang X. ,{\sl On Uniqueness of Complete Ricci Flow Solution with Curvature Bounded from Below}, arXiv:1310.1611, 2013.

\bibitem{SimonTopping2016}Simon, M.; P.-M. Topping., {\sl Local control on the geometry in 3D Ricci flow}, arXiv preprint arXiv:1611.06137 (2016).

\bibitem{SimonTopping2017}Simon, M.; P.-M. Topping., {\sl Local mollification of Riemannian metrics using Ricci flow, and Ricci limit spaces}, arXiv preprint arXiv:1706.09490 (2017).

\bibitem{Shi} Shi, W.-X. {\sl Deforming the metric on complete Riemannian manifolds}, Journal of Differential Geometry, 1989, 30(1): 223-301.

\bibitem{Simon} Simon, M. {\sl Deformation of $C^0$ Riemmannian Metrics in the Direction of Their Ricci Curvature}, Communications in Analysis and Geometry 10.5 (2002): 1033-1074.




\bibitem{TamExh}Tam, L.-F. {\sl Exhaustion functions on complete manifolds}, Recent advances in geometric analysis 11 (2008): 211-215.

\bibitem{xu} Xu, G. {\sl Short-time existence of the Ricci flow on noncompact Riemannian manifolds}, Transactions of the American Mathematical Society, 2013, 365(11): 5605-5654.

\bibitem{YangZheng} Yang, B.; Zheng, F. {\sl $U(n)$-invariant \KR flow with non-negative curvature}. Communications in Analysis and Geometry 21.2 (2013): 251-294.


\end{thebibliography}
\end{document}